\documentclass[11pt,reqno]{amsart}
\usepackage{a4wide}

\usepackage{amsthm, amssymb, amstext, amscd, amsfonts, amsxtra, latexsym, amsmath, comment, tikz, mathrsfs, mathtools, esint, stmaryrd, cancel}
\usepackage{color}
\usepackage{fancybox}
\usepackage{fullpage}
\usepackage[english]{babel}
\usepackage[latin1]{inputenc}
\usepackage{enumitem}
\usepackage{mdframed}
\usepackage{bbm}
\usepackage{url}


\numberwithin{equation}{section}
\newtheorem{theorem}{Theorem}
\numberwithin{theorem}{section}
\newtheorem{lemma}[theorem]{Lemma}
\newtheorem{proposition}[theorem]{Proposition}

\theoremstyle{definition}
\newtheorem*{definition}{Definition}
\theoremstyle{remark}
\newtheorem*{remark}{Remark}
\newtheorem*{remarks}{Remarks}


\newcommand{\Z}{\mathbb{Z}}
\newcommand{\N}{\mathbb{N}}
\newcommand{\Q}{\mathbb{Q}}

\newcommand{\R}{\mathbb{R}}
\newcommand{\C}{\mathbb{C}}

\newcommand{\im}{\operatorname{Im}}
\newcommand{\sgn}{\operatorname{sgn}}
\newcommand{\sgnstr}{\operatorname{sgn}^*}

\newcommand{\SL}{\operatorname{SL}}

\newcommand{\z}{\mathfrak{z}}
\newcommand{\x}{\mathbbm{x}}
\newcommand{\y}{\mathbbm{y}}
\renewcommand{\H}{\mathbb{H}}


\renewenvironment{proof}[1][Proof]{\begin{trivlist} \item[\hskip \labelsep {\bfseries #1:}]}{\qed\end{trivlist}}

\title{Differential operators on polar harmonic Maass forms and elliptic duality}
\author{Kathrin Bringmann}
\address{Mathematical Institute, University of Cologne, Weyertal 86-90, 50931 Cologne, Germany}
\email{kbringma@math.uni-koeln.de}
\author{Paul Jenkins}
\address{Brigham Young University, Department of Mathematics, Provo, UT 84602 USA}
\email{jenkins@math.byu.edu}
\author{Ben Kane}
\address{Department of Mathematics, University of Hong Kong, Pokfluam, Hong Kong}
\email{bkane@hku.hk}
\date{\today}
\subjclass[2010]{11F25,11F37}
\keywords{duality, elliptic coefficients, Poincar\'e series, polar harmonic Maass forms}
\begin{document}
\begin{abstract}
In this paper, we study polar harmonic Maass forms of negative integral weight. Using work of Fay, we construct Poincar\'e series which span the space of such forms and show that their elliptic coefficients exhibit duality properties which are similar to the properties known for Fourier coefficients of harmonic Maass forms and weakly holomorphic modular forms. 
\end{abstract}
\thanks{The research of the first author is supported by the Alfried
	Krupp Prize for Young University Teachers of the Krupp foundation and
	the research leading to these results receives funding from the
	European Research Council under the European Union's Seventh Framework
	Programme (FP/2007--2013) / ERC Grant agreement n. 335220 - AQSER.  This work was partially supported by a grant from the Simons Foundation (\#281876 to Paul Jenkins).  The research of the third author was supported by grants from the Research Grants Council of the Hong Kong SAR, China (project numbers HKU 27300314, 17302515, and 17316416).}
\maketitle
\section{Introduction and statement of results}

Harmonic Maass forms are smooth functions on the upper half-plane $\mathbb{H}$ which are annihilated by the hyperbolic Laplacian and have at most linear exponential growth at the cusps. These naturally generalize \begin{it}weakly holomorphic modular forms\end{it}, which are meromorphic modular forms whose only poles appear at cusps; in particular, holomorphicity in $\H$ is replaced with annihilation by the hyperbolic Laplacian defined in~(\ref{laplace}).  Harmonic Maass forms play a major role in work on mock theta functions, singular moduli and their real quadratic analogues, and many other applications.  Functions with properties similar to harmonic Maass forms, but with poles in the upper half-plane, appear in a number of recent results, including work on the resolvent kernel~\cite{Fa}, outputs of theta lifts~\cite{Borcherds}, cycle integrals~\cite{BKvP}, Fourier coefficients of meromorphic forms~\cite{BKFourier, Pe1}, and the computation of divisors of modular forms \cite{BKLRO}.  In considering functions with poles in the upper half-plane rather than solely at the cusps, we might expect that the behavior of such functions is similar to the behavior of harmonic Maass forms.  In this paper we study spaces of such \emph{polar harmonic Maass forms}, which generalize harmonic Maass forms in the same way that meromorphic modular forms generalize weakly holomorphic modular forms.

From another perspective, the subspace of polar harmonic Maass forms consisting of meromorphic modular forms is analogous to the subspace of harmonic Maass forms consisting of weakly holomorphic modular forms.
Meromorphic modular forms have not only a Fourier expansion at the cusp $i\infty$, but also an \begin{it}elliptic expansion\end{it}
\begin{equation}\label{eqn:fellexp}
f(z)=
(z-\overline{\varrho})^{-2\kappa}
\sum_{n\gg -\infty} c_{f,\varrho}(n) X_{\varrho}^n(z)
\end{equation}
around each point $\varrho \in \H$ in terms of powers of $X_{\varrho}(z) := \frac{z-\varrho}{z-\overline{\varrho}}$, where $c_{f,\varrho}(n)\in\C$.  Polar harmonic Maass forms have a more general elliptic expansion where the coefficients $c_{f,\varrho}(n)$ may additionally depend on $r_{\varrho}(z):=|X_{\varrho}(z)|$; we give this expansion in Proposition~\ref{prop:elliptic} below.  Here and throughout, $\kappa$ is assumed to be an arbitrary integer, and we use $k$ instead if there is some restriction on the weight (for instance, if we require $k \in \N$).

In addition to having similar elliptic expansions, polar harmonic Maass forms and meromorphic modular forms of weights $2\kappa$ and $2-2\kappa$ are interconnected by certain differential operators which naturally occur in the theory of harmonic Maass forms. For $\kappa\in\Z$ and $k\in\N$, set
\begin{equation}\label{eqn:xiDdef}
\xi_{2\kappa} := 2iy^{2\kappa}\overline{\frac{\partial}{\partial\overline{z}}},\qquad  D^{2k-1}:=\left(\frac{1}{2\pi i}\frac{\partial}{\partial z}\right)^{2k-1},
\end{equation}
where $z=x+iy\in\H$.  If $f$ satisfies weight $2\kappa$ modularity, then $\xi_{2\kappa} (f)$ is modular of weight $2-2\kappa$, while if $f$ satisfies weight $2-2k$ modularity, then $D^{2k-1}(f)$ satisfies weight $2k$ modularity.  

Given a polar harmonic Maass form of weight $2\kappa$, one may eliminate the singularity at $i\infty$ and, if $\kappa \geq 0$, also the constant term in the Fourier expansion, by subtracting an appropriate harmonic Maass form
(cf.~\cite[Theorem 6.10]{MockBook} for the existence of forms with arbitrary principal parts).  This yields a weight $2\kappa$ \begin{it}polar harmonic cusp form\end{it}, a weight $2\kappa$ polar harmonic Maass form which vanishes at $i\infty$ if $\kappa \in \N_0$ and is bounded at $i\infty$ if $\kappa\in-\N$.
 We denote the subspace of such forms  by $\mathscr{H}_{2\kappa}$.  A canonical basis for this space may be defined by specifying the growth behavior near singularities in $\SL_2(\Z)\backslash\H$, which is given via principal parts at $\z$; see~\eqref{eqn:elliptic} for further details on the principal parts which may occur.
This basis is defined in~\eqref{eqn:Pdef} below, and we show in Theorem~\ref{OperatorTheorem} that for $k\in\N_{>1}$ they indeed span $\mathscr{H}_{2-2k}$.  Specifically, for each $n\in-\N$ and $\z\in\H$, in \eqref{eqn:Pdef} we construct  the unique weight $2-2k$ polar harmonic cusp form $\mathbb{P}_{2-2k, n}^\z$ with principal part
\begin{equation}\label{eqn:n<0sing}
2\omega_{\z}\mathcal{C}_{2k-1,-n}(z-\overline{\z})^{2k-2} X_{\z}^{n}(z).
\end{equation}
Here $\omega_{\z}$ is the size of the stabilizer of $\mathfrak{z}$ in $\operatorname{PSL}_2(\mathbb{Z})$ and $\mathcal{C}_{2k-1,-n}$ is the constant defined in \eqref{eqn:Cdef} below and explicitly computed as a quotient of factorials in \eqref{eqn:n<0singconst}. For $n\in\N_0$, the functions $\mathbb{P}_{2-2k,n}^{\z}$ have non-meromorphic principal parts.  We describe these explicitly in Theorem \ref{thm:PoincProperties} below.

To understand the behavior of the basis elements $\mathbb{P}_{2-2k,n}^{\z}\in \mathscr{H}_{2-2k}$ under the differential operators defined in \eqref{eqn:xiDdef}, we define the subspace $\mathbb{S}_{2\kappa}\subseteq\mathscr{H}_{2\kappa}$ consisting of meromorphic modular forms without poles at $i\infty$, which we call \emph{meromorphic cusp forms}. For each point $\z$ in the fundamental domain $\SL_2(\Z)\backslash \mathbb{H}$, we let $\mathscr{H}_{2\kappa}^{\z}$ (resp. $\mathbb{S}_{2\kappa}^\z$) be the subspace of forms in $\mathscr{H}_{2\kappa}$ (resp. $\mathbb{S}_{2\kappa}$) with singularities allowed only at $\z$.  For $k\in\N_{>1}$, Petersson (cf. \cite[equation (5c.3)]{Pe1} or \cite[equation (21)]{Pe2}) defined a family of meromorphic Poincar\'{e} series $\Psi_{2k, n}^\z$ which form a natural canonical basis for the space $\mathbb{S}_{2k}^\z$.  Specifically, for $n \in-\N$ and a point $\z \in \H$, the function $\Psi_{2k, n}^\z$ is the unique meromorphic cusp form which is orthogonal to cusp forms (see \cite[Satz 8]{Pe2}) under a regularized inner product defined in \cite[equation (3)]{Pe2} and whose principal part is
\begin{equation}\label{eqn:PsiPP}
2\omega_{\z}
\left(z-\overline{\z}\right)^{-2k}
 X_{\z}^{n}(z).
\end{equation}

As shown in the next theorem, the action of the differential operators $\xi_{2-2k}$ and $D^{2k-1}$ give an additional natural splitting of the space $\mathbb{S}_{2k}^\z$ into three subspaces, which we denote by $\mathbb{D}_{2k}^{\z}$, $\mathbb{E}_{2k}^{\z}$, and the space of cusp forms $S_{2k}$.  Again using the regularization \cite[equation (3)]{Pe2}, or its extension \cite[equation (3.3)]{BKvP} to arbitrary meromorphic cusp forms, the subspace $\mathbb{D}_{2k}^{\z}$ (resp. $\mathbb{E}_{2k}^{\z}$) consists of those forms in $\mathbb{S}_{2k}^{\z}$ which are orthogonal to cusp forms and whose principal parts are linear combinations of \eqref{eqn:PsiPP} with $n\leq -2k$ (resp. $-2k<n<0$). The families $\Psi_{2k, m}^\z$ of meromorphic Poincar\'{e} series with $m \leq -2k$ or with $-2k < m < 0$ form bases for $\mathbb{D}_{2k}^{\z}$ and $\mathbb{E}_{2k}^{\z}$ respectively.
\begin{theorem}\label{OperatorTheorem}
Suppose that $k\in\N_{>1}$.
\noindent

\noindent
\begin{enumerate}[leftmargin=*,label={\rm(\arabic*)}]
\item
Every $F\in \mathscr{H}_{2-2k}$ is a linear combination of the functions from $\{\mathbb{P}_{2-2k,n}^{\z}: \z\in\H, n\in\Z\}.$ Moreover, if the only poles of $F$ in $\H$ occur at points equivalent to $\z$ under the action of $\SL_2(\Z)$, then $F$ is a linear combination of functions from $\{\mathbb{P}_{2-2k,n}^{\z}: n\in\Z\}.$
\item If $F\in \mathscr{H}_{2-2k}^{\z}$, then $\xi_{2-2k}(F)\in \mathbb{D}_{2k}^{\z}$ {\rm(}resp. $\xi_{2-2k}(F)\in S_{2k}${\rm)} if and only if $D^{2k-1}(F)\in S_{2k}$ {\rm(}resp. $D^{2k-1}(F)\in \mathbb{D}_{2k}^{\z}${\rm).}
\item If $F\in \mathscr{H}_{2-2k}^{\z}$, then $\xi_{2-2k}(F)\in \mathbb{E}_{2k}^{\z}$ if and only if $D^{2k-1}(F)\in \mathbb{E}_{2k}^{\z}.$
\item  For $n\in\Z$ and $\z=\x+i\y\in\H$, we have
\begin{align*}
\xi_{2-2k}\left(\mathbb{P}_{2-2k,n}^{\z}\right)&=(4\y)^{2k-1}\Psi_{2k, -n-1}^{\z},\\
D^{2k-1}\left(\mathbb{P}_{2-2k,n}^{\z}\right)&=(2k-2)!\left(-\frac{\y}{\pi}\right)^{2k-1}\Psi_{2k, n+1-2k}^{\z}.
\end{align*}
\rm
\end{enumerate}
\end{theorem}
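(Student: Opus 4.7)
The plan is to prove~(4) first by direct computation on the defining Poincar\'e series, deduce~(1) from a principal-part matching argument combined with a vanishing statement, and then obtain~(2) and~(3) as formal consequences of~(1) and~(4) by tracking how the $n$-index transforms under the two operators.

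For~(4), the key observation is that both $\xi_{2-2k}$ and $D^{2k-1}$ intertwine the weight $2-2k$ slash action with the weight $2k$ slash action; for $D^{2k-1}$ this is Bol's identity. Together with absolute convergence of the Poincar\'e series (which is where $k>1$ enters), this lets me pull the operators inside the $\SL_2(\Z)$-averaging in~\eqref{eqn:Pdef}, reducing the identity to a computation on the seed. The seed is a Fay-type kernel built from $X_{\z}$ and $(z-\overline\z)$ together with a non-meromorphic correction enforcing harmonicity, and a direct calculation shows that its image under $\xi_{2-2k}$ (resp.\ $D^{2k-1}$) is a scalar multiple of the Petersson seed $2\omega_{\z}(z-\overline\z)^{-2k}X_{\z}^{m}(z)$ with $m=-n-1$ (resp.\ $m=n+1-2k$). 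Averaging back recognizes the output as the corresponding $\Psi_{2k,m}^{\z}$, and tracking the factorial constants absorbed in $\mathcal{C}_{2k-1,-n}$ yields the exact prefactors $(4\y)^{2k-1}$ and $(2k-2)!(-\y/\pi)^{2k-1}$.

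For~(1), let $F\in\mathscr{H}_{2-2k}$ have singularities in a fundamental domain at $\z_1,\ldots,\z_r\in\H$. Using~\eqref{eqn:n<0sing} together with the description of the non-meromorphic principal parts for $n\in\N_0$ (Theorem~\ref{thm:PoincProperties}), a finite linear combination $F_j$ of the $\mathbb{P}_{2-2k,n}^{\z_j}$ has the same full elliptic principal part as $F$ at $\z_j$. The difference $G=F-\sum_j F_j$ then lies in $\mathscr{H}_{2-2k}$, is bounded at $i\infty$, and has vanishing elliptic principal parts everywhere in $\H$; the standard vanishing for weight $2-2k$ polar harmonic cusp forms of this type (e.g.\ by showing that $\xi_{2-2k}(G)$ is a cusp form orthogonal to all cusps, together with the absence of weakly holomorphic forms of weight $2-2k<0$ bounded at $i\infty$) forces $G\equiv 0$. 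The orbit-restricted assertion follows because only $\mathbb{P}_{2-2k,n}^{\z}$ contribute to the principal parts on the orbit of $\z$.

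For~(2) and~(3), part~(1) applied in $\mathscr{H}_{2-2k}^{\z}$ writes $F = \sum_n c_n\mathbb{P}_{2-2k,n}^{\z}$, and part~(4) yields
\[
\xi_{2-2k}(F) = (4\y)^{2k-1}\sum_n c_n\Psi_{2k,-n-1}^{\z}, \qquad D^{2k-1}(F) = (2k-2)!\left(-\tfrac{\y}{\pi}\right)^{2k-1}\sum_n c_n\Psi_{2k,n+1-2k}^{\z}.
\]
The index maps $n\mapsto -n-1$ and $n\mapsto n+1-2k$ carry the three regimes $n\leq -1$, $0\leq n\leq 2k-2$, and $n\geq 2k-1$ to (cusp forms, $\mathbb{E}_{2k}^{\z}$, $\mathbb{D}_{2k}^{\z}$) and ($\mathbb{D}_{2k}^{\z}$, $\mathbb{E}_{2k}^{\z}$, cusp forms) respectively, reading off the biconditionals in~(2) and the single one in~(3). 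The main obstacle is the seed computation in~(4): isolating the correct Fay-type kernel and verifying that the constants $(4\y)^{2k-1}$, $(2k-2)!$, and $(-\y/\pi)^{2k-1}$ emerge cleanly from the differentiation, together with the routine but necessary justification of absolute convergence so that $\xi_{2-2k}$ and $D^{2k-1}$ can be exchanged with the $\SL_2(\Z)$-sum.
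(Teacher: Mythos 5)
Your proposal is correct, and its overall architecture --- establish (4) first, deduce (1) by matching elliptic principal parts and invoking a vanishing statement, and read off (2) and (3) from the index shifts $n\mapsto -n-1$ and $n\mapsto n+1-2k$ --- matches the paper's, which delegates all four parts to Theorem \ref{thm:PoincProperties} and Proposition \ref{Psiprop}. The genuine difference lies in how you prove (4). You differentiate the seed $\varphi_{2-2k,n}^{\z}$ of \eqref{eqn:Pdef} directly and pull $\xi_{2-2k}$ and $D^{2k-1}$ through the $\SL_2(\Z)$-sum; the required seed identities are exactly Proposition \ref{Explicit} applied to the single term $\beta\left(1-r_{\z}^2(z);2k-1,-n\right)X_{\z}^n(z)$, viewed via \eqref{eqn:beta0def} as a $\beta_0$-term plus $\mathcal{C}_{2k-1,-n}$ times a meromorphic term, and the constants do emerge as claimed (for instance, for $n<0$ one computes $-\frac{(2k-2-n)!}{(-n-1)!}\,\mathcal{C}_{2k-1,-n}=-(2k-2)!$, giving the prefactor $(2k-2)!\left(-\y/\pi\right)^{2k-1}$ since $2k-1$ is odd). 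The paper instead identifies $\mathbb{P}_{2-2k,n}^{\z}$ with Fay's assembled kernel $\mathcal{G}_{k,1-k}^{-n,0}$ via \eqref{eqn:PG} and applies Theorem \ref{raise G} together with Lemma \ref{lem:PsiGrel}: there the raising and lowering identities are proved at the level of the full Poincar\'e series, so no termwise differentiation needs justifying, at the price of bookkeeping the constants $c_{s,\kappa}^{m,n}$ and taking limits $s\to k$ to cancel poles of the $\Gamma$-factors. Your route is more direct and avoids those limits entirely, but does require the (standard for $k>1$, yet not free) locally uniform convergence of the differentiated series, which you correctly flag. Two further points of comparison: your explicit vanishing argument for $G=F-\sum_j F_j$ in (1) --- $\xi_{2-2k}(G)$ is a cusp form whose Petersson norm vanishes by Stokes' theorem, and a holomorphic form of negative weight bounded at $i\infty$ is zero --- fills in a step the paper leaves implicit; and in (2) and (3) your reduction, exactly like the paper's, tacitly relies on $S_{2k}$, $\mathbb{E}_{2k}^{\z}$, and $\mathbb{D}_{2k}^{\z}$ meeting only in $\{0\}$ so that membership can be read off from which indices $n$ survive in the expansion of $F$.
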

\begin{remark}
The three subspaces in the splitting explained by Theorem \ref{OperatorTheorem} (2) and Theorem \ref{OperatorTheorem} (3) are analogues of certain subspaces of weakly holomorphic modular forms. Specifically, the
space $\C E_{2k}$ spanned by the Eisenstein series is paired with itself in the same way,
while the space $S_{2k}$ of cusp forms is paired with its orthogonal complement inside the subspace of
weakly holomorphic modular forms which have vanishing constant terms in their Fourier expansion.
\end{remark}

For $\z_1,\z_2\in \H$, our next result gives a duality-type relationship between the coefficients of $\Psi_{2k, m}^{\z_1}$ and those of $\mathbb{P}_{2-2k, \ell}^{\z_2}$.  To state it, for $m \in-\N$, let $c_{2k, \z_2}^{\z_1}(m, n)$ denote the $n$th coefficient in the elliptic expansion \eqref{eqn:fellexp} around $\varrho=\z_2$ of $\Psi_{2k, m}^{\z_1}$.  Similarly, $c_{2-2k, \z_1}^{\z_2, +}(m, n)$ is the $n$th coefficient in the meromorphic part of the elliptic expansion (see \eqref{eqn:elliptic}) around $\varrho=\z_1$ of $\mathcal{C}_{2k-1,-m}^{-1}\mathbb{P}_{2-2k, m}^{\z_2}$; in other words, by~\eqref{eqn:n<0sing} these are the coefficients of the unique weight $2-2k$ polar harmonic Maass forms with principal parts
\[
2\omega_{\z}(z-\overline{\z})^{2k-2} X_{\z}^{m}(z),
\]
which closely resemble the principal parts \eqref{eqn:PsiPP} in positive weight.
\begin{theorem}\label{DualityTheorem}
For $m, n\in\N_0$, we have
\[
c_{2-2k, \z_1}^{\z_2, +}(-m-1, n) = -c_{2k, \z_2}^{\z_1} (-n-1, m).
\]
\end{theorem}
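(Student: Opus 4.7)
The plan is to adapt the ``total residue'' proof of Zagier duality for weakly holomorphic forms, with residues at the interior points $\z_1,\z_2$ playing the role that the residue at $i\infty$ plays classically. I would set
\begin{equation*}
f(z) := \mathcal{C}_{2k-1,m+1}^{-1}\,\mathbb{P}_{2-2k,-m-1}^{\z_2}(z),\qquad g(z) := \Psi_{2k,-n-1}^{\z_1}(z),
\end{equation*}
so that, by definition, the meromorphic elliptic coefficients of $f$ around $\z_1$ are $c_{2-2k,\z_1}^{\z_2,+}(-m-1,\cdot)$ and the elliptic coefficients of $g$ around $\z_2$ are $c_{2k,\z_2}^{\z_1}(-n-1,\cdot)$. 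Since $f$ has weight $2-2k$ and $g$ has weight $2k$, the expression $h(z)\,dz := f(z)g(z)\,dz$ is a modular differential one-form on $\SL_2(\Z)\backslash\H$ whose only singularities in a fundamental domain $\mathcal{F}$ occur at $\z_1$ (from the pole of $g$) and $\z_2$ (from the meromorphic part of $f$), and both $f$ and $g$ vanish at $i\infty$ by construction.

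Next I would apply Stokes' theorem on the truncated fundamental domain $\mathcal{F}_\var$ obtained by removing geodesic discs of radius $\var$ around $\z_1,\z_2$ (and a horocyclic neighborhood of the cusp, which contributes nothing in the limit):
\begin{equation*}
\oint_{\partial\mathcal{F}_\var} h(z)\,dz \;=\; \iint_{\mathcal{F}_\var}\overline{\partial}\,h\wedge dz.
\end{equation*}
The side-identifications of $\mathcal{F}$ cancel pairwise by the weight-two modularity of $h\,dz$, so the left-hand side reduces to $-2\pi i$ times the sum of the residues of $h\,dz$ at $\z_1$ and $\z_2$. Using the prescribed principal part of $g$ at $\z_1$ from \eqref{eqn:PsiPP} together with the meromorphic elliptic expansion of $f$ at $\z_1$, and expanding in powers of $X_{\z_1}$ with the substitution $dz=(z-\overline{\z_1})^2(\z_1-\overline{\z_1})^{-1}dX_{\z_1}$, the coefficient of $X_{\z_1}^{-1}\,dX_{\z_1}$ is an explicit constant (involving $\omega_{\z_1}$ and a power of $\z_1-\overline{\z_1}$) times $c_{2-2k,\z_1}^{\z_2,+}(-m-1,n)$. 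A symmetric calculation at $\z_2$, now using the principal part of $f$ from \eqref{eqn:n<0sing} (divided by $\mathcal{C}_{2k-1,m+1}$) paired with the elliptic expansion of $g$ there, yields the same explicit constant times $c_{2k,\z_2}^{\z_1}(-n-1,m)$. The non-meromorphic part of $f$ is continuous at $\z_1$ and contributes only $O(\var)$ to the circular integral.

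To conclude, I would show that the right-hand bulk integral vanishes as $\var\to 0$. Since $g$ is meromorphic, $\overline{\partial} h = g\,\overline{\partial} f$, and by the definition of $\xi_{2-2k}$ in \eqref{eqn:xiDdef} this integral is, up to an explicit constant, the regularized inner product of $g$ against $\xi_{2-2k}(f)$ in the sense of \cite{BKvP}. By Theorem~\ref{OperatorTheorem}(4), $\xi_{2-2k}(f)$ is proportional to the Poincar\'e series $\Psi_{2k,m}^{\z_2}$, and both this form and $g=\Psi_{2k,-n-1}^{\z_1}$ are orthogonal to cusp forms under this regularization; a second Stokes-type argument then shows that the cross-pairing of two such Poincar\'e series attached to distinct points vanishes. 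Combining, the two residues cancel, and tracking the normalization $\mathcal{C}_{2k-1,m+1}^{-1}$ together with the factors $\omega_{\z_j}$ and $(\z_j-\overline{\z_j})^{\pm(2k-2)}$ delivers the identity with the stated sign. The main obstacle will be the vanishing of this bulk integral, particularly in the boundary cases where $\z_1$ and $\z_2$ are $\SL_2(\Z)$-equivalent, or where $m=0$ places $\Psi_{2k,0}^{\z_2}$ in an Eisenstein-type component requiring separate treatment; a subsidiary bookkeeping step is verifying that the accumulated constants from the residue calculations combine to produce exactly $-1$ rather than some spurious prefactor.
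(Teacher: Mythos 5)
Your proposal is correct and follows essentially the same route as the paper: the paper starts from Petersson's orthogonality $0=\left<\Psi_{2k,n}^{\z_1},\Psi_{2k,m}^{\z_2}\right>$ for $n<0\leq m$ (using that $\Psi_{2k,m}^{\z_2}$ is a cusp form and $\Psi_{2k,n}^{\z_1}$ is orthogonal to cusp forms), rewrites one factor as $\xi_{2-2k}(\mathbb{P}_{2-2k,-m-1}^{\z_2})$, and unpacks the inner product via Stokes' theorem into the two elliptic residues --- exactly your contour-integral argument run in the opposite direction, including the same treatment of the non-meromorphic terms via $\beta_0(1-\varepsilon^2;2k-1,\ell+1)\to 0$. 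The only small corrections: the vanishing of your bulk integral is due to this orthogonality (Proposition \ref{Psiprop}), not to $\z_1$ and $\z_2$ being distinct (the identity holds even when $[\z_1]=[\z_2]$), and your worry about $m=0$ is unfounded since $\Psi_{2k,0}^{\z_2}$ is already a cusp form.
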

\begin{remark}
Similar results for Fourier coefficients are well known.  Petersson used such identities in his construction of a basis of meromorphic modular forms (see \cite[(3a.9)]{Pe1}, while a systematic study of them originated from Zagier's work on singular moduli \cite{Zagier}.  In the interim, results have been obtained by a number of authors, including 
the second author and Duke in \cite{DukeJenkins}, and Guerzhoy \cite{GuerzhoyGrids}, among others.  To give one such result, for $m,n\in\N$, take $\z_1=\z_2=i\infty$ and let $c_{2k}(-m,n)$ denote the $n$th coefficient of the weight $2k$ weakly holomorphic modular form which grows towards $i\infty$ like $e^{-2\pi i m z}$ and let $c_{2-2k}^+(-m,n)$ be the $n$th coefficient of the holomorphic part of the weight $2-2k$ harmonic Maass form which grows towards $i\infty$ like $e^{-2\pi i m z}$.  Then one has 
\[
c_{2-2k}^+(-n,m)=-c_{2k}(-m,n).
\]
\end{remark}

The paper is organized as follows.  In Section \ref{sec:prelim}, we introduce polar harmonic Maass forms and recall results from Fay, who studied related functions in \cite{Fa}.  In Section \ref{sec:ellexp}, we relate Fay's functions to polar harmonic Maass forms and compute the elliptic expansions of polar harmonic Maass forms, and in Section~\ref{sec:Poincare} we investigate Poincar\'e series and prove Theorem \ref{OperatorTheorem}. We conclude the paper by proving Theorem \ref{DualityTheorem} in Section \ref{sec:duality}.

\section{Preliminaries}\label{sec:prelim}
\subsection{Basic definitions}
For $M=\begin{psmallmatrix} a&b\\c&d\end{psmallmatrix}\in\operatorname{SL}_2(\mathbb{Z})$, $\kappa \in \Z$, and $f:\mathbb{H}\to\mathbb{C}$, we define the usual slash operator
$$
f|_{2\kappa}M(z)=f|_{2\kappa,z}M(z):=(cz+d)^{-2\kappa}
f(Mz).
$$
\begin{definition}
For $\kappa\in\mathbb{Z}$, a {\it polar harmonic Maass form of weight $2\kappa$} is a function $F:\mathbb{H}\to\mathbb{C}$ which is real analytic outside a discrete set of points and satisfies the following conditions:
\begin{enumerate}[leftmargin=*,label={\rm(\arabic*)}]
\item For every $M\in\operatorname{SL}_2(\mathbb{Z})$, we have $F|_{2\kappa}M=F$.
\item We have $\Delta_{2\kappa}(F)=0$, with $\Delta_{2\kappa}$ the {\it weight $2\kappa$ hyperbolic Laplace operator}
\begin{align}
\Delta_{2\kappa}:=-y^2\left(\frac{\partial^2}{\partial x^2}+\frac{\partial^2}{\partial y^2}\right)+2i\kappa y\left(\frac{\partial}{\partial x}+i\frac{\partial}{\partial y}\right).\label{laplace}
\end{align}
\item For every $\mathfrak{z}\in\mathbb{H}$, there exists $n\in\mathbb{N}_0$ such that $(z-\mathfrak{z})^nF(z)$ is bounded in some neighborhood of $\mathfrak{z}$.
\item The function $F$ has at most linear exponential growth at $i\infty$; that is,  $F(z) = O(e^{Cy})$ for some constant $C\in\R^+$ (uniform in $x$ for $y$ sufficiently large) as $y \rightarrow \infty$.
\end{enumerate}
If $(2)$ is replaced by $\Delta_{2\kappa}(F)=\lambda F$, then $F$ is called a {\it polar Maass form} with eigenvalue $\lambda$.

\end{definition}
Denote by $\mathcal{H}_{2\kappa}$ the space of polar harmonic Maass forms of weight $2\kappa$.  The subspace of $\mathcal{H}_{2\kappa}$ consisting of forms that map under $\xi_{2\kappa}$ to cusp forms is denoted by $\mathcal{H}_{2\kappa}^{\operatorname{cusp}}$; more generally, we add the superscript ``cusp'' to any subspace of $\mathcal{H}_{2\kappa}$ to indicate the space formed by taking the intersection of the subspace with $\mathcal{H}_{2\kappa}^{\operatorname{cusp}}$.  We also use the superscript $\z$ to indicate the subspace of forms whose only singularity in $\SL_2(\Z)\backslash\H$ appears at $\z$.

Although in this paper we are primarily interested in expansions of polar harmonic Maass forms around points in the upper half-plane, for completeness and for later comparison we next recall some properties about the Fourier expansions of polar harmonic Maass forms around $i\infty$.  These expansions yield natural decompositions of polar harmonic Maass forms into holomorphic and non-holomorphic parts (cf. \cite[Proposition 4.3]{Hejhal}).  Namely, for a polar harmonic Maass form $F$ of weight $2-2k<0$ and $y\gg_F 1$, we have
$$
F(z)=F^+(z)+F^-(z)
$$
where, for some $c_F^\pm(n)\in\mathbb{C}$, we define the {\it holomorphic part $F^+$} (resp. {\it non-holomorphic part $F^-$}) of $F$ at $i\infty$ as
\begin{align}
F^+(z):&=\sum_{n\gg -\infty}c^+_F(n)e^{2\pi i nz},\nonumber\\
F^-(z):&=c^-_F(0)y^{2k-1}+\sum_{\substack{n\ll\infty \\ n\neq 0}}c^-_F(n)\Gamma(2k-1, -4\pi ny)e^{2\pi i nz},
\label{nonhol}
\end{align}
with the incomplete gamma function $\Gamma(\alpha, w):=\int_{w}^{\infty}e^{-t}t^{\alpha-1}dt$.  The sum of all of the terms which grow towards $i\infty$ is called the {\it principal part} of $F$.

We next consider elliptic expansions of polar harmonic Maass forms.  Rather than expansions in $e^{2\pi i z}$, the natural expansions of polar harmonic Maass forms around $\varrho$ are given in terms of $X_{\varrho}(z)$.  We further write
\begin{equation}\label{eqn:rdef}
r_{\varrho}(z):=\tanh\!\left(\frac{d(z,\varrho)}{2}\right)=\left|X_{\varrho}(z)\right|,
\end{equation}
with $d(z,\varrho)$ the hyperbolic distance between $z$ and $\varrho$.  The second identity in the definition of $r_{\varrho}(z)$ follows by the well-known formula (see \cite[p. 131]{Beardon})
\begin{equation}\label{d}
\cosh(d(z,\varrho))=1+\frac{|z-\varrho|^2}{2y\eta},
\end{equation}
where throughout the paper $\eta:=\im(\varrho)$. From \eqref{d}, for $M\in \SL_2(\Z)$ one also immediately obtains the invariance
\begin{equation}\label{eqn:dinv}
d(Mz,M\varrho) = d(z,\varrho).
\end{equation}
For $0\leq {Z}<1$ and $a\in \N$ and $b\in \Z$, we also require the function
\begin{equation}\label{eqn:beta0def}
\beta_0\left({Z};a,b\right):=\beta\left({Z};a,b\right)-\mathcal{C}_{a,b}
\end{equation}
where
\begin{equation*}
\beta({Z};a,b):=\int_0^{Z} t^{a-1} (1-t)^{b-1} dt
\end{equation*}
 is the \begin{it}incomplete beta function\end{it} and
\begin{equation}\label{eqn:Cdef}
\mathcal{C}_{a,b}:=\sum_{\substack{0\leq j\leq a-1 \\ j\neq -b}} \binom{a-1}{j}\frac{(-1)^{j}}{j+b}.
\end{equation}
 Note that by \cite[8.17.7]{NIST}, we have
\begin{equation}\label{bF}
\beta(Z;a,b)=\frac{Z^a}{a}{}_2F_1(a, 1-b; a+1;Z),
\end{equation}
where ${}_2F_1$ is the {\it Gauss hypergeometric function} defined by
$$
{}_2F_1(a,b;c;Z) := \sum_{n=0}^{\infty} \frac{(a)_n (b)_n}{(c)_n} \frac{Z^n}{n!}
$$
with $(a)_n:= a(a+1)\cdots(a+n-1)$. We often use the fact that
\begin{equation}\label{eqn:2F10}
{_2F_1}(a,0;c;Z)={_2F_1}(0,b;c;Z)=1.
\end{equation}
We also require the {\it Euler transformation} (see 15.8.1 of \cite{NIST})
\begin{equation}\label{Euler}
{}_2F_1(a,b;c;Z)=(1-Z)^{c-a-b}{}_2F_1(c-a, c-b;c;Z).
\end{equation}

The modified incomplete $\beta$-function $\beta_0$ may also be written in special cases as a hypergeometric function, as can be seen by a direct calculation.
\begin{lemma}\label{bet0lem}
 Assume that $0\leq Z <1$, $a\in \N$, and $b\in \Z$.
 \begin{enumerate}[leftmargin=*,label={\rm(\arabic*)}]
 \item We have
$$
 \beta_0 (Z;a,b) = \sum_{ \substack{0\leq j \leq a-1 \\ j\neq -b}} \binom{a-1}{j} \frac{(-1)^{
j+1
}}{j+b} (1-Z)^{j+b} +\delta_{1-a\leq b \leq 0}
 \binom{a-1}{-b} (-1)^{
b+1
} \log (1-Z) .
$$
Here and throughout we use the notation $\delta_S=1$ if some property $S$  is true and $0$ otherwise.

 \item If $b>0$, then
 \[
 \beta_0 (Z;a,b) = -\frac{1}{b} (1-Z)^b {}_2F_1 (b,1-a;1+b ;1-Z)=-\beta(1-Z;b,a).
 \]
 \end{enumerate}
\end{lemma}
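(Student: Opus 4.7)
The plan is to prove both parts by a single direct calculation, starting from the observation that the substitution $u=1-t$ in the defining integral gives
\[
\beta(Z;a,b)=\int_0^{Z}t^{a-1}(1-t)^{b-1}\,dt=\int_{1-Z}^{1}(1-u)^{a-1}u^{b-1}\,du.
\]
Since $a-1\in\N_0$, I would then expand $(1-u)^{a-1}=\sum_{j=0}^{a-1}\binom{a-1}{j}(-1)^j u^j$ by the binomial theorem and integrate term-by-term; the integrand in each resulting term is $u^{j+b-1}$ with $0\leq j\leq a-1$.

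For part (1), the integral $\int_{1-Z}^{1}u^{j+b-1}\,du$ evaluates to $\tfrac{1}{j+b}\bigl(1-(1-Z)^{j+b}\bigr)$ when $j\neq -b$, and to $-\log(1-Z)$ when $j=-b$; the latter case occurs precisely when $-b\in\{0,1,\ldots,a-1\}$, i.e.\ exactly when $1-a\leq b\leq 0$. Splitting each $j\neq -b$ contribution into its $Z$-independent and $(1-Z)^{j+b}$ pieces, one recognizes the total $Z$-independent part as $\sum_{j\neq -b}\binom{a-1}{j}\tfrac{(-1)^j}{j+b}=\mathcal{C}_{a,b}$ by \eqref{eqn:Cdef}, so it cancels in $\beta_0=\beta(\cdot;a,b)-\mathcal{C}_{a,b}$. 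What remains is precisely the claimed closed form; the only point needing care is the sign bookkeeping, namely that $(-1)^j\cdot(-1)=(-1)^{j+1}$ in the first sum, and $(-1)^{-b}(-\log(1-Z))=(-1)^{b+1}\log(1-Z)$ in the log term (using $(-1)^{-b}=(-1)^{b}$ for $b\in\Z$).

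For part (2), the hypothesis $b>0$ removes the exceptional index $j=-b$ from the summation range entirely, so no logarithm appears. The cleanest finish is to observe that when $b>0$ the integral $\int_0^1 t^{a-1}(1-t)^{b-1}\,dt$ converges, and the same term-by-term computation (formally taking $Z\to 1$ in the formula above) shows that its value is exactly $\mathcal{C}_{a,b}$. Therefore
\[
\beta_0(Z;a,b)=\beta(Z;a,b)-\int_0^{1}t^{a-1}(1-t)^{b-1}\,dt=-\int_Z^{1}t^{a-1}(1-t)^{b-1}\,dt,
\]
and a further substitution $u=1-t$ identifies the last integral with $\beta(1-Z;b,a)$, giving the second equality in part (2). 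The hypergeometric form is then immediate from \eqref{bF} applied with $(a,b)$ replaced by $(b,a)$ and $Z$ replaced by $1-Z$.

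I do not foresee any real obstacle: the whole argument is elementary manipulation of a polynomial-times-binomial integrand, and the only place where one must be careful is tracking whether the index $j=-b$ lies in $\{0,\ldots,a-1\}$ and handling the resulting logarithmic contribution with the correct sign. The identification of the $Z$-independent piece of $\beta(Z;a,b)$ with $\mathcal{C}_{a,b}$ is what makes the statement clean and is the conceptual heart of the lemma.
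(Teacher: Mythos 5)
Your proposal is correct and is precisely the ``direct calculation'' the paper alludes to without writing out: substituting $u=1-t$, expanding $(1-u)^{a-1}$ binomially, integrating term-by-term with the exceptional index $j=-b$ producing the logarithm, and recognizing the $Z$-independent part as $\mathcal{C}_{a,b}$. The one step worth a word of justification in a written version is the identity $\mathcal{C}_{a,b}=\int_0^1 t^{a-1}(1-t)^{b-1}\,dt$ for $b>0$, which follows from the same binomial expansion applied to the convergent integral over $[0,1]$; with that noted, all signs and cases check out.
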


\indent
We have the following elliptic expansion of weight $2-2k$ harmonic functions, whose proof is deferred to Section \ref{sec:ellexp}.
\begin{proposition}\label{prop:elliptic}
Suppose that $k\in \N$ and $\varrho \in\mathbb{H}$.
\begin{enumerate}[leftmargin=*,label={\rm(\arabic*)}]
\item If $F$ satisfies $\Delta_{2-2k} (F) = 0$ and for some $n_0\in \N$ the function $r_{\varrho}^{n_0}(z)F(z)$ is bounded in some neighborhood $\mathcal{N}$ around $\varrho$, then there exist $c_{F,\varrho}^{\pm}(n) \in\mathbb{C}$, such that for $z\in\mathcal{N}$ and $n_{k}:=\min\!\left(2k-2, n_0\right)$, we have
\begin{multline}\label{eqn:elliptic}
F(z)=
\left(z-\overline{\varrho}\right)^{2k-2}
\Bigg(\sum_{n\geq -n_0} c_{F,\varrho}^+(n) X_{\varrho}^n(z)+ \sum_{ n=0}^{n_{k}} c_{F,\varrho}^-(n)\beta\left(1-r_{\varrho}^2(z);2k-1,-n\right) X_{\varrho}^n(z)\\
+ \sum_{\substack{n\leq n_0\\ n\notin [0,n_{k}]}} c_{F,\varrho}^-(n)\beta_0\left(1-r_{\varrho}^2(z);2k-1,-n\right) X_{\varrho}^n(z)\Bigg).
\end{multline}
\item If $F\in \mathcal{H}_{2-2k}$, then the sum in \eqref{eqn:elliptic} only runs over those $n$ which satisfy $n\equiv k-1\pmod{\omega_{\varrho}}$.
If  $F\in\mathcal{H}_{2-2k}^{\operatorname{cusp}}
$, then the second sum is empty and the third sum only runs over $n<0$.
\end{enumerate}
\end{proposition}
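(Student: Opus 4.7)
The plan is to pass to the disk model via the biholomorphism $w = X_\varrho(z)$ which sends $\varrho \mapsto 0$ and $\H$ to the unit disk $\mathbb{D}$. Noting that $z - \overline{\varrho} = 2i\eta/(1-w)$, I would factor out the prefactor $(z-\overline{\varrho})^{2k-2}$ by writing
\[
F(z) = (z-\overline{\varrho})^{2k-2} \widetilde{F}(w,\overline{w})
\]
and translate the condition $\Delta_{2-2k}(F) = 0$ into a PDE for $\widetilde{F}$ on $\mathbb{D}$. The key point is that after this conjugation the resulting operator becomes rotation invariant in $w$, so $\widetilde{F}$ admits a Fourier-type expansion $\widetilde{F}(w,\overline{w}) = \sum_{n\in\Z} a_n(r_\varrho^2)\, w^n$ with $a_n$ a function only of $|w|^2 = r_\varrho^2(z)$. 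Substituting term by term then yields, for each $n$, a second-order linear ODE in the real variable $t = 1 - r_\varrho^2 \in (0,1]$ whose leading coefficient is of the shape $t (1-t)$, so that the indicial behavior at the two endpoints produces exactly two linearly independent solutions.

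The first solution is the constant $a_n \equiv 1$, which contributes the term $c^+_{F,\varrho}(n)\, X_\varrho^n(z)$ in the meromorphic part of \eqref{eqn:elliptic}. The second solution can be written, up to normalization, as the integral
\[
\int_0^{1-r_\varrho^2} t^{2k-2}(1-t)^{-n-1}\, dt = \beta\!\left(1 - r_\varrho^2;\, 2k-1,\, -n\right),
\]
at least in the range where this integral converges at $t=0$. For $n \in [0, n_k]$ this is exactly the desired form and produces the middle sum in \eqref{eqn:elliptic}. For other $n$, the antiderivative acquires either a nonzero boundary value at $t=1$ (which would overlap with the constant solution) or a logarithmic piece; in both cases this is handled precisely by the modified function $\beta_0(1-r_\varrho^2; 2k-1, -n)$, whose definition \eqref{eqn:beta0def} subtracts exactly the constant $\mathcal{C}_{2k-1,-n}$ required to make the second solution linearly independent of the first. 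Lemma~\ref{bet0lem} shows that this normalization matches the explicit closed form of the second solution as a hypergeometric-type series. The growth assumption $r_\varrho^{n_0}(z) F(z) = O(1)$ near $\varrho$ then translates directly into $a_n \equiv 0$ for $n < -n_0$, truncating all three sums from below.

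For part (2), assume $F \in \mathcal{H}_{2-2k}$. The stabilizer of $\varrho$ in $\operatorname{PSL}_2(\Z)$ is cyclic of order $\omega_\varrho$, generated by an elliptic element $M_\varrho$. In the disk coordinate $w$, this element acts by $w \mapsto \zeta w$ for a primitive $\omega_\varrho$-th root of unity $\zeta$, while a direct calculation of the cocycle $cz+d$ at $z = \varrho$ shows that the prefactor $(z-\overline{\varrho})^{2k-2}$ transforms by a phase $\zeta^{-(k-1)}$ under the slash action. Matching phases term by term in the expansion \eqref{eqn:elliptic} forces $n \equiv k-1 \pmod{\omega_\varrho}$ in every sum. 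For $F \in \mathcal{H}_{2-2k}^{\operatorname{cusp}}$, I would compute $\xi_{2-2k}$ applied to each basis element and check that the terms corresponding to $n \geq 0$ in the non-holomorphic part produce contributions that are not cusp forms (the middle sum covers $n \in [0, n_k]$ and so must vanish entirely, while in the third sum only $n < 0$ survives).

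The main obstacle is the careful bookkeeping in Step~2: correctly computing $\Delta_{2-2k}$ in the $w$-variable after extracting the prefactor, identifying the two solutions of the radial ODE for each $n$, and verifying that the constants $\mathcal{C}_{2k-1,-n}$ chosen in the definition of $\beta_0$ are precisely the ones that kill the overlap with the constant solution across all cases (including the boundary cases $n \in \{-(2k-2), \ldots, 0\}$ where a logarithm can appear). Once this is done, the growth and modularity conclusions follow mechanically.
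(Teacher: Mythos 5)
Your overall strategy is sound and is essentially a from-scratch re-derivation of the result the paper imports from Fay: the paper sets $G=y^{1-k}F$, invokes Fay's elliptic expansion (Proposition~\ref{Fay expand}) with $s=1-\kappa=k$ to obtain an expansion in the special functions $\mathcal{P}^n_{1-\kappa,\kappa}$ and $\mathcal{Q}^n_{1-\kappa,\kappa}$, and then uses the hypergeometric identities of Lemma~\ref{elreFay}~(1) to convert those into $X_\varrho^n$ and the (modified) incomplete beta functions. Your separation of variables in the disk coordinate $w=X_\varrho(z)$ is exactly how Fay's theorem is proved, and your identification of the two radial solutions (the constant, and $\beta(1-r_\varrho^2;2k-1,-n)$) agrees with what \eqref{Qid} produces. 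Your phase computation for part (2) --- the prefactor contributes $\zeta^{-(k-1)}$ and $X_\varrho^n$ contributes $\zeta^{n}$, forcing $n\equiv k-1\pmod{\omega_\varrho}$ --- is correct and is an explicit version of the paper's appeal to \eqref{eqn:rinv} and \cite[(2a.16)]{Pe1}; likewise your plan for the cusp condition via $\xi_{2-2k}$ is exactly what Proposition~\ref{Explicit} delivers. What the paper's route buys is that the special-function identities are reusable elsewhere (they drive Section~\ref{sec:Poincare}); what yours buys is independence from Fay at the cost of redoing his ODE analysis.

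There is, however, one concrete error in the truncation step. You assert that the growth hypothesis forces $a_n\equiv 0$ for $n<-n_0$, ``truncating all three sums from below.'' That is neither what happens nor what the proposition asserts: the third sum in \eqref{eqn:elliptic} runs over all $n\le n_0$ outside $[0,n_k]$, so it is truncated from \emph{above}, and it contains terms with $n$ arbitrarily negative. The point is that the two radial solutions for the mode $n$ have opposite asymptotics as $z\to\varrho$: the constant solution contributes $X_\varrho^n(z)\sim (z-\varrho)^n$, singular only for $n<0$, whereas by \eqref{eqn:beta0asymptotic} the second solution contributes $\beta_0(1-r_\varrho^2(z);2k-1,-n)X_\varrho^n(z)\sim (z-\varrho)^{-n}$ (plus a logarithm when $0\le n\le 2k-2$), singular only for $n>0$. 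Hence boundedness of $r_\varrho^{n_0}(z)F(z)$ forces $c^+_{F,\varrho}(n)=0$ for $n<-n_0$ but $c^-_{F,\varrho}(n)=0$ for $n>n_0$; for $n<-n_0$ the full radial coefficient $a_n$ need not vanish, since its $\beta_0$-component is bounded there. As written, your argument would both forbid legitimate non-meromorphic terms with $n<-n_0$ and admit illegitimate ones with $n>n_0$. The fix is the two-sided asymptotic analysis just described, which is exactly what the paper carries out; the rest of your outline survives intact.
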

\begin{remark}
Instead of the expansion given in \eqref{eqn:elliptic}, one could rewrite the second sum in the shape of the third to get a seemingly more uniform expansion.  However, it is natural to split off these terms because they have logarithmic singularities.  They are also special, as we shall see in Proposition \ref{Explicit}, in that they are annihilated neither by $\xi_{2-2k}$ nor $D^{2k-1}$.  Thus, they may be viewed in a sense both as both meromorphic and non-meromorphic parts.  This emulates the constant term of the non-holomorphic part \eqref{nonhol} of the expansion at $i\infty$, which is a constant multiple of $y^{2k-1}$, is annihilated by neither operator, and also exhibits a logarithmic singularity.
\end{remark}

For $F$ annihilated by $\Delta_{2-2k}$ (with $k\in\N$), we define the \begin{it}meromorphic part of the elliptic expansion\end{it} \eqref{eqn:elliptic} around $\varrho$ by
$$
F_{\varrho}^+ (z):=
\left(z-\overline{\varrho}\right)^{2k-2}
\sum_{n\geq -n_0}c_{F,\varrho}^+(n)X_{\varrho}^n(z)
$$
and its \begin{it}non-meromorphic part\end{it} by
\begin{multline*}
F_{\varrho}^- (z) :=
\left(z-\overline{\varrho}\right)^{2k-2}
\sum_{n=0}^{n_{k}} c_{F,\varrho}^-(n)\beta\left(1-r_{\varrho}^2(z);2k-1,-n\right) X_{\varrho}^n(z)
\\
+\left(z-\overline{\varrho}\right)^{2k-2}
\sum_{\substack{n\leq n_0\\ n\notin [0,n_{k}]} } c_{F,\varrho}^-(n)\beta_0\left(1-r_{\varrho}^2(z);2k-1,-n\right) X_{\varrho}^n(z).
\end{multline*}

The next proposition, proven in Section~\ref{sec:ellexp}, explicitly gives the elliptic expansion under the action of the operators $\xi_{2-2k}$ and $D^{2k-1}$.
\begin{proposition}\label{Explicit}
For $k\in \N$ and $F:\mathbb{H} \to \mathbb{C}$ satisfying $\Delta_{2-2k}(F) = 0$, we have
\begin{align*}
	\xi_{2-2k}\left(F(z)\right)&=(4\eta)^{2k-1}
(z-\overline{\varrho})^{-2k}
\sum_{n\leq n_0}\overline{c_{F,\varrho}^{-}(n)} X_\varrho^{-n-1}(z),\\
	D^{2k-1}\left(F(z)\right)&=\left(\frac{\eta}{\pi}\right)^{2k-1}
(z-\overline{\varrho})^{-2k}
\sum_{n\geq -n_0}b_{F,\varrho}(n)X_{\varrho}^{n+1-2k}(z)
\end{align*}
and
\[
b_{F,\varrho}(n):=\begin{cases}
-\frac{(-n+2k-2)!}{(-n-1)!}c^+_{F,\varrho}(n) &\text{if }n<0,\\
-(2k-2)!c^-_{F, \varrho}(n)&\text{if } 0\leq n\leq n_{k},\\
\frac{n!}{(n+1-2k)!}c^+_{F,\varrho}(n)&\text{if } n\geq 2k-1.
\end{cases}
\]
\end{proposition}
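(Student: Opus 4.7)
The plan is to apply $\xi_{2-2k}$ and $D^{2k-1}$ term-by-term to the elliptic expansion of $F$ supplied by Proposition~\ref{prop:elliptic}, using linearity to reduce matters to the action on each basis function. Two elementary identities will drive every calculation: $\partial_z X_\varrho = 2i\eta/(z-\overline{\varrho})^2$ (so $X_\varrho$ is holomorphic in $z$ and $\partial_{\overline{z}}X_\varrho=0$) and $1-r_\varrho^2 = 4y\eta/|z-\overline{\varrho}|^2$. A convenient auxiliary change of variables is $u:=X_\varrho/(1-X_\varrho)$, for which one checks directly that $\partial_z u = 1/(2i\eta)$ and $(z-\overline{\varrho})^{2k-2}X_\varrho^n = (2i\eta)^{2k-2}u^n(1+u)^{2k-2-n}$, so on functions holomorphic in $z$ one has $\partial_z^{2k-1} = (2i\eta)^{-(2k-1)}\partial_u^{2k-1}$.

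For $\xi_{2-2k}(F)$ each meromorphic basis term is holomorphic in $z$, hence killed by $\partial_{\overline{z}}$. In each non-meromorphic summand the only $\overline{z}$-dependence is through $r_\varrho^2 = X_\varrho\overline{X_\varrho}$; the chain rule together with $\partial_{\overline{z}}(1-r_\varrho^2) = 2i\eta X_\varrho/(\overline{z}-\varrho)^2$ gives the same $\partial_{\overline{z}}$-derivative for both $\beta$ and $\beta_0$ summands, which differ only by a constant. Substituting the identities above, the $(1-r_\varrho^2)^{2k-2}=(4y\eta)^{2k-2}|z-\overline{\varrho}|^{-(4k-4)}$ factor supplies a $y^{2k-2}$ which absorbs the $y^{2-2k}$ prefactor of $\xi_{2-2k}$, while the combination $(z-\overline{\varrho})^{2k-2}|z-\overline{\varrho}|^{-(4k-4)}=(\overline{z}-\varrho)^{-(2k-2)}$ merges with the remaining $(\overline{z}-\varrho)^{-2}$ after conjugation to produce $(z-\overline{\varrho})^{-2k}$, leaving exactly the claimed $(4\eta)^{2k-1}\overline{c_{F,\varrho}^-(n)}X_\varrho^{-n-1}/(z-\overline{\varrho})^{2k}$.

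For $D^{2k-1}$ on the meromorphic part, the $u$-substitution reduces the task to evaluating $\partial_u^{2k-1}[u^n(1+u)^{2k-2-n}]$. Expanding by Leibniz and factoring out $u^{n+1-2k}(1+u)^{-n-1}$ produces a sum whose coefficients $(n)^{\underline{2k-1-j}}(2k-2-n)^{\underline{j}}$ turn out to be $j$-independent up to sign, namely $(-1)^j n!/(n+1-2k)!$ for $n\ge 2k-1$ and $(-1)^{2k-1-j}(2k-2-n)!/(-n-1)!$ for $n<0$, so the resulting binomial sums collapse by Chu--Vandermonde to $\partial_u^{2k-1}[u^n(1+u)^{2k-2-n}] = (n)^{\underline{2k-1}}u^{n+1-2k}/(1+u)^{n+1}$. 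The middle range $0\le n\le 2k-2$ gives zero because the expression is a polynomial of degree $2k-2$ in $u$, which accounts for the absence of $c^+_{F,\varrho}(n)$ from the middle range of $b_{F,\varrho}(n)$. Re-expressing $u^{n+1-2k}/(1+u)^{n+1}=(2i\eta)^{2k}X_\varrho^{n+1-2k}/(z-\overline{\varrho})^{2k}$ and combining the $(2\pi i)^{-(2k-1)}$, $(2i\eta)^{-(2k-1)}$, $(2i\eta)^{2k-2}$, and $(2i\eta)^{2k}$ factors yields the stated prefactor $(\eta/\pi)^{2k-1}(z-\overline{\varrho})^{-2k}$ together with the correct factorial ratios in $b_{F,\varrho}(n)$.

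It remains to analyze $D^{2k-1}$ on the non-meromorphic part, for which Lemma~\ref{bet0lem} is the key. If $n\notin[0,n_k]$ (the $\beta_0$ terms), no logarithm appears in Lemma~\ref{bet0lem}(1), so $\beta_0(1-r_\varrho^2;2k-1,-n)$ is a polynomial in $r_\varrho^2=X_\varrho\overline{X_\varrho}$; after multiplying by $(z-\overline{\varrho})^{2k-2}X_\varrho^n$ and passing to the $u$-variable, every summand is a polynomial of degree at most $2k-2$ in $u$ times an anti-holomorphic factor, hence killed by $\partial_u^{2k-1}$. If $0\le n\le n_k$ (the $\beta$ terms), write $\beta=\beta_0+\mathcal{C}_{2k-1,-n}$; the constant piece and the polynomial part of $\beta_0$ are killed as before, and only the logarithmic contribution $\binom{2k-2}{n}(-1)^{n+1}(\log X_\varrho+\log\overline{X_\varrho})$ from Lemma~\ref{bet0lem}(1) survives. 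The anti-holomorphic $\log\overline{X_\varrho}$ pairs with a degree-$\leq 2k-2$ polynomial and so contributes nothing, leaving us to compute $\partial_u^{2k-1}[u^n(1+u)^{2k-2-n}(\log u-\log(1+u))]$. The cleanest route is to observe that $\partial_\nu[u^\nu(1+u)^{2k-2-\nu}]=u^\nu(1+u)^{2k-2-\nu}(\log u-\log(1+u))$, so this derivative equals $\partial_\nu[(\nu)^{\underline{2k-1}}u^{\nu+1-2k}/(1+u)^{\nu+1}]|_{\nu=n}$. Since $(\nu)^{\underline{2k-1}}$ has a simple zero at $\nu=n\in[0,2k-2]$ with derivative $\prod_{j\neq n,\,0\le j\le 2k-2}(n-j)=(-1)^n n!(2k-2-n)!$, only this factor contributes, producing $(-1)^n n!(2k-2-n)!\,u^{n+1-2k}/(1+u)^{n+1}$. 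Multiplying by $\binom{2k-2}{n}(-1)^{n+1}$ collapses the combinatorics to $-(2k-2)!$, which after returning to $z$-variables matches $b_{F,\varrho}(n)=-(2k-2)!\,c^-_{F,\varrho}(n)$ exactly. The main obstacle is this last step: identifying the analytic continuation in $\nu$ and extracting $-(2k-2)!$ cleanly from the residue of $(\nu)^{\underline{2k-1}}$ against the prefactor $\binom{2k-2}{n}(-1)^{n+1}$ provided by Lemma~\ref{bet0lem}(1).
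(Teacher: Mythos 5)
Your proof is correct, but it takes a genuinely different route from the paper. The paper pulls everything through Fay's machinery: it writes $D^{2k-1}=(-4\pi)^{1-2k}y^{-k}\mathcal{K}_{1-k}^{2k-1}\circ (y^{1-k}\cdot)$, identifies each term of the elliptic expansion with one of Fay's radial functions $\mathcal{P}^n_{k,1-k}$ or $\mathcal{Q}^n_{k,1-k}$ via Lemma~\ref{elreFay}, and then iterates Fay's ladder relations \eqref{eqn:KP}--\eqref{eqn:KQ}, so that the factorial ratios in $b_{F,\varrho}(n)$ emerge as products of the constants $e_{k,j-k}$ and $d_{k,j-k}$ and the final forms are re-identified through hypergeometric evaluations. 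You instead work entirely elementarily in the affine coordinate $u=(z-\varrho)/(2i\eta)$, where the identity $\partial_u^{2k-1}\bigl[(1+u)^{2k-2}g(X_\varrho)\bigr]=(1+u)^{-2k}g^{(2k-1)}(X_\varrho)$ (a Bol-type identity for the M\"obius map $u\mapsto u/(1+u)$, which is a cleaner way to see your Leibniz computation; the collapse you invoke is really just the binomial theorem $\sum_j\binom{2k-1}{j}(-u)^j(1+u)^{2k-1-j}=1$ rather than Chu--Vandermonde) does all the work, and the differentiation-in-the-parameter trick $\partial_\nu\bigl[u^\nu(1+u)^{2k-2-\nu}\bigr]$ handles the logarithmic terms where the paper instead invokes $\widehat{\mathcal{K}}(\widehat{\mathcal{Q}})$ and Lemma~\ref{elreFay}~(2). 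I checked the key computations: the evaluation $(n)^{\underline{2k-1}}$ in each range, the derivative $\prod_{j\neq n}(n-j)=(-1)^nn!(2k-2-n)!$ against the prefactor $\binom{2k-2}{n}(-1)^{n+1}$ giving $-(2k-2)!$, and the prefactor bookkeeping $(2\pi i)^{1-2k}(2i\eta)^{2k-1}=(\eta/\pi)^{2k-1}$ and $(4\eta)^{2k-1}$ all come out right, and your annihilation arguments for the middle range of the meromorphic part and for the $\beta_0$ terms coincide with the paper's (degree-$\leq 2k-2$ polynomials in $z$ with antiholomorphic coefficients). What your approach buys is self-containedness and transparency --- no special functions are needed, and you also spell out the $\xi_{2-2k}$ computation, which the paper's proof omits; what the paper's approach buys is reuse, since the same Fay identities are needed anyway for the Poincar\'e series computations in Section~\ref{sec:Poincare}.
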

In addition to the operators $\xi_{2\kappa}$ and $D^{2k-1}$ given in \eqref{eqn:xiDdef}, we require the classical \textit{Maass raising} and \textit{lowering operators}:
\begin{equation*}
	R_{2\kappa}:=2i\frac{\partial}{\partial z}+\frac{
2\kappa
}{y}\qquad\text{and}\qquad L:=-2iy^2\frac{\partial}{\partial \overline{z}}.
\end{equation*}
The raising operator (resp. lowering operator) increases (resp. decreases) the weight by $2$. Moreover
\begin{equation}\label{DLR}
-\Delta_\kappa=L\circ R_\kappa+\kappa=R_{\kappa-2}\circ L.
\end{equation}
We also require iterated raising
$$
R^n_\kappa:=R_{\kappa+2(n-1)}\circ\cdots\circ R_{\kappa+2}\circ R_\kappa.
$$
For $k\in\N$, the raising operator and $D^{2k-1}$ are related by {\it Bol's identity}
\begin{equation}\label{Bol}
D^{2k-1}=(-4\pi)^{1-2k}R^{2k-1}_{2-2k}.
\end{equation}
\subsection{Work of Fay}
In this section we recall work of Fay \cite{Fa} and rewrite some of his statements in the notation used in this paper. Fay considered functions $g: \mathbb{H} \to \mathbb{C}$ transforming for $M=\begin{psmallmatrix}a&b\\c&d\end{psmallmatrix}\in\operatorname{SL}_2(\mathbb{Z})$ as
$$
g(Mz)=\left(\frac{cz+d}{c\overline{z}+d}\right)^\kappa g(z).
$$
Then $f(z):=y^{-\kappa}g(z)$ transforms as
\begin{equation*}
f(Mz)=(cz+d)^{2\kappa}f(z).
\end{equation*}

Define the operator
$$
\mathcal{D}_\kappa:=y^2\left(\frac{\partial^2}{\partial x^2}+\frac{\partial^2}{\partial y^2}\right)-2i\kappa y
\frac{\partial}{\partial x}.
$$
By \cite[page 144]{Fa}, for $M=\left(\begin{smallmatrix} a& b\\ c&d\end{smallmatrix}\right)\in\operatorname{SL}_2(\mathbb{R})$,
we have
$$
\mathcal{D}_\kappa\left(\left(\frac{c\overline{z}+d}{cz+d}\right)^\kappa g(Mz)\right)=\left(\frac{c\overline{z}+d}{cz+d}\right)^\kappa\left[\mathcal{D}_\kappa\left(g(w)\right)\right]_{w=Mz}.
$$
Let $\mathcal{F}_{\kappa,s}$ denote the space of $g:\mathbb{H}\to\mathbb{C}$ satisfying the following conditions:
\begin{enumerate}[leftmargin=*,label={\rm(\arabic*)}]
\item $g(Mz)=\left(\frac{cz+d}{c\overline{z}+d}\right)^\kappa g(z)$;
\item $\mathcal{D}_\kappa(g)=s(s-1)g$;
\item $g$ has at most finitely many singularities of finite order in $\operatorname{SL}_2(\mathbb{Z})\backslash \overline{\mathbb{H}}$, where $\overline{\H}:=\H\cup \Q\cup\{i\infty\}$.
\end{enumerate}

Functions in $\mathcal{F}_{\kappa,s}$ are closely related to polar Maass forms.
In order to study the relationship between $\mathcal{D}_{\kappa}$ acting on Fay's functions and $\Delta_{2\kappa}$ acting on polar Maass forms,
we require the following variants of the Maass raising and lowering operators (see \cite[(3)]{Fa}),
 \begin{equation*}
	\mathcal{K}_\kappa=\mathcal{K}_{\kappa,z}
:=2iy\frac{\partial}{\partial z}+\kappa, \qquad\qquad \mathcal{L}_\kappa=
\mathcal{L}_{\kappa,z}:=-2iy\frac{\partial}{\partial \overline{z}}-\kappa.
\end{equation*}
Note that $\mathcal{K}_\kappa$ sends $\mathcal{F}_{\kappa, s}$ to $\mathcal{F}_{\kappa+1, s}$ and $\mathcal{L}_\kappa$ sends $\mathcal{F}_{\kappa, s}$ to $\mathcal{F}_{\kappa-1, s}$.
Moreover (see \cite[(7)]{Fa})
\begin{equation}\label{DLK}
\mathcal{D}_\kappa=\mathcal{L}_{\kappa+1}\circ\mathcal{K}_\kappa+\kappa(1+\kappa)=\mathcal{K}_{\kappa-1}\circ\mathcal{L}_\kappa+\kappa(\kappa-1).
\end{equation}
We also require iterated raising and lowering
\begin{align*}
	\mathcal{K}^n_\kappa:=\mathcal{K}_{\kappa+n-1}\circ\cdots\circ\mathcal{K}_{\kappa+1}\circ\mathcal{K}_{\kappa},\
	\mathcal{L}^n_\kappa:=\mathcal{L}_{\kappa+n-1}\circ\cdots\circ\mathcal{L}_{\kappa+1}\circ\mathcal{L}_{\kappa}.
\end{align*}

We next translate these operators into the notation used in this paper and compare eigenfunctions under these operators.
\begin{proposition}\label{prop:LRKL}
\noindent

\noindent
\begin{enumerate}[leftmargin=*,label={\rm(\arabic*)}]
\item For $n\in \N_0$, we have
\begin{align}\label{eqn:KRrep}
\mathcal{K}_\kappa^{n}\left(g(z)\right)&=y^{\kappa+n} R_{2\kappa}^{n}\left(f(z)\right),
\\
\label{eqn:LLrep}
\mathcal{L}_\kappa^{n}\left(g(z)\right)&=y^{\kappa-n}L^{n}\left(f(z)\right),
\\
\label{DDel}
\mathcal{D}_\kappa\left(g(z)\right)&=-y^\kappa\left(\Delta_{2\kappa}+\kappa(1-\kappa)\right)f(z).
\end{align}
If $g\in \mathcal{F}_{\kappa,s}$, then
\begin{equation}\label{eqn:Delta2kf}
\Delta_{2\kappa}(f)= (s-\kappa)(1-s-\kappa)f.
\end{equation}
In particular, $f$ is harmonic if and only if $g\in \mathcal{F}_{\kappa,\kappa}$ or $g\in \mathcal{F}_{\kappa,1-\kappa}$.

\item
The function $g\in \mathcal{F}_{\kappa,s}$ if and only if
the function $f$ is a polar Maass form of weight $2\kappa$ with eigenvalue $(s-\kappa)(1-s-\kappa)$.  In particular if $g\in \mathcal{F}_{\kappa,\kappa}$ or $g\in \mathcal{F}_{\kappa,1-\kappa}$ and grows at most like $y^{\kappa}$ for $y\to \infty$, then $f\in \mathscr{H}_{2\kappa}$.
\end{enumerate}
\end{proposition}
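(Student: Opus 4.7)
The plan is to reduce Part (1) to the base case $n=1$ of the operator identities, then derive \eqref{DDel} by combining the factorization \eqref{DLK} of $\mathcal{D}_\kappa$ with the corresponding factorization \eqref{DLR} of $\Delta_{2\kappa}$. Part (2) follows essentially formally from \eqref{eqn:Delta2kf} once Fay's transformation law is translated to ordinary weight $2\kappa$ modularity.

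For the base case of \eqref{eqn:KRrep}, I substitute $g=y^\kappa f$ and use $\partial y/\partial z=-i/2$ to compute $2iy\,\partial_z(y^\kappa f)=\kappa y^\kappa f+2iy^{\kappa+1}\partial_z f$; adding the correction $\kappa g$ doubles the first summand and produces $y^{\kappa+1}(2i\partial_z+2\kappa/y)f=y^{\kappa+1}R_{2\kappa}(f)$. The symmetric computation for $\mathcal{L}_\kappa$ uses $\partial y/\partial\bar z=i/2$, and now the two $\kappa g$ contributions cancel instead of doubling, leaving $-2iy^{\kappa+1}\partial_{\bar z}f=y^{\kappa-1}L(f)$. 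For the iterated identities, I observe that $\mathcal{K}_\kappa(g)\in\mathcal{F}_{\kappa+1,s}$ corresponds via the same substitution to $R_{2\kappa}(f)$ at weight $2\kappa+2$, so reapplying the base case at index $\kappa+1$ and inducting on $n$ gives \eqref{eqn:KRrep}; the argument for \eqref{eqn:LLrep} is analogous.

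For \eqref{DDel}, I use \eqref{DLK} to write $\mathcal{D}_\kappa(g)=\mathcal{L}_{\kappa+1}\mathcal{K}_\kappa(g)+\kappa(\kappa+1)g$; by the $n=1$ identities just established this equals $y^\kappa LR_{2\kappa}(f)+\kappa(\kappa+1)y^\kappa f$. Applying \eqref{DLR} at weight $2\kappa$, namely $LR_{2\kappa}(f)=-\Delta_{2\kappa}(f)-2\kappa f$, and simplifying via $\kappa(\kappa+1)-2\kappa=\kappa^2-\kappa=-\kappa(1-\kappa)$ yields \eqref{DDel}. Substituting $\mathcal{D}_\kappa(g)=s(s-1)g=s(s-1)y^\kappa f$ into the resulting identity, cancelling $-y^\kappa$, and using the algebraic identity $(s-\kappa)(1-s-\kappa)=-s(s-1)-\kappa(1-\kappa)$ proves \eqref{eqn:Delta2kf}. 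Harmonicity is then equivalent to $s\in\{\kappa,1-\kappa\}$.

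Part (2) is mostly bookkeeping. Using $\imm(Mz)=y|cz+d|^{-2}$ together with $|cz+d|^{2\kappa}=(cz+d)^\kappa(c\bar z+d)^\kappa$, Fay's transformation law for $g$ is equivalent to the weight $2\kappa$ modularity $f|_{2\kappa}M=f$; the finite-order singularity condition in $\mathcal{F}_{\kappa,s}$ translates directly into condition (3) of the polar Maass definition, and the eigenvalue equivalence is exactly \eqref{eqn:Delta2kf}. For the final assertion, $|g(z)|\ll y^\kappa$ as $y\to\infty$ gives $|f(z)|\ll 1$, which is well within linear exponential growth, so $f\in\mathcal{H}_{2\kappa}$; the cuspidal subspace assignment $f\in\mathscr{H}_{2\kappa}$ then follows by inspection of the Fourier expansion at $i\infty$ under the boundedness/vanishing conditions dictated by the sign of $\kappa$. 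The main obstacle is the constant-matching in \eqref{DDel}: the symbol $\kappa$ in \eqref{DLR} denotes the weight (hence becomes $2\kappa$ in Fay's indexing), and the unexpected coefficient $\kappa(1-\kappa)$ arises precisely from the mismatch $\kappa(\kappa+1)-2\kappa$.
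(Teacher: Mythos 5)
Your proposal is correct and follows essentially the same route as the paper: verify the $n=1$ cases of \eqref{eqn:KRrep} and \eqref{eqn:LLrep} by direct substitution of $g=y^\kappa f$ and iterate, deduce \eqref{DDel} by combining \eqref{DLK} with \eqref{DLR} (your constant bookkeeping $\kappa(\kappa+1)-2\kappa=-\kappa(1-\kappa)$ is exactly the point), obtain \eqref{eqn:Delta2kf} by substituting the eigenvalue equation, and reduce Part (2) to matching the transformation laws, singularities, and growth. You in fact supply more computational detail than the paper, which dispatches the base cases with ``it is not hard to see.''
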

\begin{proof}
(1)
Firstly it is not hard to see that
\begin{equation}\label{KR}
	\mathcal{K}_\kappa\left(g(z)\right)=y^{\kappa+1} R_{2\kappa}(f(z)).
\end{equation}
Iterating \eqref{KR} yields \eqref{eqn:KRrep}.
Similarly, to prove \eqref{eqn:LLrep}, one first shows that
\begin{equation}\label{LL}
\mathcal{L}_\kappa\left(g(z)\right)=y^{\kappa-1}L(f(z)).
\end{equation}
One then obtains \eqref{eqn:LLrep} inductively.
The eigenfunction property \eqref{DDel} then follows using \eqref{DLK}, \eqref{LL}, \eqref{KR}, and \eqref{DLR}.
To prove \eqref{eqn:Delta2kf}, suppose that
 $g\in \mathcal{F}_{\kappa,s}$.  Then, by \eqref{DDel}, we have
\[
0=\mathcal{D}_{\kappa}(g(z))-s(s-1)g(z)=-y^{\kappa}\Delta_{2\kappa}\left(f(z)\right) -\kappa(1-\kappa)y^{\kappa}f(z) - s(s-1)y^{\kappa} f(z).
\]

(2)  Part (1) implies that the eigenfunction properties of $f$ and $g$ are equivalent. Comparing the singularities of both functions then yields the claim.
\end{proof}

\indent Fay then considered a natural family of functions which behave well under his differential operators when multiplied by $e^{in\theta_{\z}(z)}$ with $\theta_{\z}(z)\in\R$ satisfying $X_{\z}(z)=r_{\z}(z) e^{i\theta_{\z}(z)}$.  For $s\in\C$, $\kappa\in \R$, and $z,\mathfrak{z}\in \H$, these are given by (see \cite[p. 147]{Fa}, slightly modified)
\[
\mathcal{P}_{s,\kappa}^n(z,\z):=\widehat{\mathcal{P}}_{s,\kappa}^{n}\!\left(r_{\z}(z)\right) \qquad \text{and}\qquad \mathcal{Q}_{s,\kappa}^n(z,\z):=\widehat{\mathcal{Q}}_{s,\kappa}^{n}\!\left(r_{\z}(z)\right),
\]
with
\begin{align}
	\widehat{\mathcal{P}}^n_{s, \kappa}(r):=&r^{|n|}\left(1-r^2\right)^s {}_2F_1\left(s-\sgnstr(n)\kappa, s+\sgnstr(n)\kappa+|n|; 1+|n|;r^2\right),\label{P}\\
	\widehat{\mathcal{Q}}^n_{s, \kappa}(r):=&-\frac{\Gamma(s-\sgnstr (n)\kappa)\Gamma(s+\sgnstr(n)\kappa +|n|)}{4\pi\Gamma(2s)}r^{-|n|}\left(1-r^2\right)^s\nonumber\\
	&\times{}_2F_1\left(s+\sgnstr(n)\kappa, s-\sgnstr(n)\kappa-|n|; 2s;1-r^2\right), \label{Q}
\end{align}
where for $n\in \Z$ we set $\sgnstr(n):=\sgn(n+1/2)$.

These functions are meromorphic in $s$ with at most simple poles at $s\in \pm \kappa -\N_0$ and satisfy certain useful relations.  Directly from the definitions \eqref{P} and \eqref{Q}, one obtains
\begin{align*}
\mathcal{P}_{s,\kappa}^{n}(z,\mathfrak{z})=\mathcal{P}_{s,-\kappa}^{-n}(z,\mathfrak{z}),\quad
\mathcal{Q}_{s,\kappa}^{n}(z,\mathfrak{z})=\mathcal{Q}_{s,-\kappa}^{-n}(z,\mathfrak{z}).
\end{align*}
Moreover, for $t\in\R$ we have
\begin{equation*}
\mathcal{P}_{t,\kappa}^{n}(z,\mathfrak{z}),\ \mathcal{Q}_{t,\kappa}^{n}(z,\mathfrak{z})\ \in\ \R.
\end{equation*}

The special values of $\mathcal{P}$ and $\mathcal{Q}$ in the cases $s=\kappa$ and $s=1-\kappa$ play an important role in our investigation.  To describe these, we set
\begin{equation}\label{eqn:cndef}
a_n=a_{\kappa,n}:= \frac{(-4)^{\kappa-1}}{\pi}
\begin{cases}
n! &\text{if } n\geq 0, \\
\frac{\Gamma (1-2\kappa -n)}{\Gamma(1-2\kappa)} &\text{if } n<0.
\end{cases}
\end{equation}
In the next section, we prove the following lemma.
\begin{lemma}\label{elreFay}
\begin{enumerate}[leftmargin=*,label={\rm(\arabic*)}]
\item For $\kappa\in-\N_0$, we have
	\begin{align}\label{Pid}
	y^{-\kappa}\left(\frac{z-\overline{\z}}{\z-\overline{z}}\right)^{-\kappa}\mathcal{P}^n_{1-\kappa, \kappa}(z, \z)e^{in\theta_{\z}(z)}
	&=\frac{\left(-4\y\right)^{\kappa}}{(z-\overline{\z})^{2\kappa}}X^n_\z(z) \begin{cases}
1 & \text{if }n\geq 0, \\
n \beta_0\left(1-r_{\z}^2(z);1-2\kappa,-n\right)
&\text{if }n< 0,
\end{cases}
\\ \label{Qid}
y^{-\kappa}\left(\frac{z-\overline{\z}}{\z-\overline{z}}\right)^{-\kappa}\mathcal{Q}^n_{1-\kappa, \kappa}(z, \z)e^{in\theta_{\z}(z)}
	&= \frac{a_n\y^{\kappa}}{(z-\overline{\z})^{2\kappa}}\beta\left(1-r^2_\z(z);1-2\kappa, -n\right)
X^n_\z(z).
\end{align}
\item
For $\kappa\in\N$ and $n\in-\N$, we have
\begin{equation}\label{eqn:Qs=kappa}
y^{-\kappa} \left(\frac{z-\overline{\z}}{\z-\overline{z}}\right)^{-\kappa}\mathcal{Q}_{\kappa, \kappa}^{n}(z, \z) e^{in\theta_{\z}(z)} = -\frac{(-n-1)!}{4\pi} \frac{(-4\y)^\kappa}{(z-\overline{\z})^{2\kappa}}X_\z^{n}(z).
\end{equation}
\item
For $\kappa \in \N$ and $n\in\N_0$, we have
\begin{equation}\label{eqn:Qs=kappapos}
\mathop{\lim}_{s \rightarrow \kappa} \frac{y^{-\kappa}}{\Gamma(s-\kappa)}  \left(\frac{z-\overline{\z}}{\z-\overline{z}}\right)^{-\kappa}\mathcal{Q}_{s, \kappa}^n(z, \z) e^{in\theta_{\z}(z)}= -\frac{(2\kappa+n-1)!}{4\pi (2\kappa-1)!} \frac{(-4\y)^\kappa}{
(z-\overline{\z})^{2\kappa}
}X_\z^n(z).
\end{equation}
\end{enumerate}
\end{lemma}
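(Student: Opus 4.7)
The plan is to evaluate $\widehat{\mathcal{P}}^n_{s,\kappa}(r)$ and $\widehat{\mathcal{Q}}^n_{s,\kappa}(r)$ at the special values $s=1-\kappa$ and $s=\kappa$ by substituting into \eqref{P} and \eqref{Q}, simplifying the resulting ${}_2F_1$ via \eqref{eqn:2F10}, the Euler transformation \eqref{Euler}, and the closed form ${}_2F_1(a,c;c;Z)=(1-Z)^{-a}$, and then recognizing the output in terms of $\beta$ or $\beta_0$ using \eqref{bF} and Lemma \ref{bet0lem}(2). In every case there is a universal factor of $(1-r_\z^2)^\kappa$ left over after these simplifications; using $|z-\overline\z|^2=-(z-\overline\z)(\z-\overline z)$ together with the elementary identity $1-r_\z^2=\frac{4y\y}{|z-\overline\z|^2}$ gives
\[
y^{-\kappa}\left(\frac{z-\overline{\z}}{\z-\overline z}\right)^{-\kappa}(1-r_\z^2)^\kappa=\frac{(-4\y)^\kappa}{(z-\overline\z)^{2\kappa}},
\]
and $r_\z^n e^{in\theta_\z(z)}=X_\z^n(z)$ then absorbs the remaining powers of $r_\z$ and $e^{in\theta_\z}$, producing the target right-hand sides.

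For \eqref{Pid} with $\kappa\in-\N_0$ and $n\geq 0$, one of the numerator parameters of the hypergeometric in \eqref{P} equals the denominator $1+n$, so the identity ${}_2F_1(a,c;c;Z)=(1-Z)^{-a}$ produces the simple closed form directly. For $n<0$ the hypergeometric becomes ${}_2F_1(1,1-2\kappa+|n|;1+|n|;r^2)$; one application of \eqref{Euler} converts it to ${}_2F_1(|n|,2\kappa;1+|n|;r^2)$, which by Lemma \ref{bet0lem}(2) is exactly the series representing $-|n| r^{-2|n|}\beta_0(1-r^2;1-2\kappa,-n)$, yielding the $\beta_0$-factor on the right of \eqref{Pid}. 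For \eqref{Qid}, applying \eqref{Euler} to the hypergeometric in \eqref{Q} with $s=1-\kappa$ converts it into the form recognized as $\beta(1-r^2;1-2\kappa,-n)$ by \eqref{bF}, while the ratio of Gamma factors collapses using $\Gamma(2-2\kappa)=(1-2\kappa)\Gamma(1-2\kappa)$ to isolate precisely the constant $a_n$ from \eqref{eqn:cndef} (one handles $n\geq 0$ and $n<0$ separately because $\sgnstr(n)$ flips which Gamma simplifies).

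For \eqref{eqn:Qs=kappa}, at $s=\kappa$ and $n\in-\N$ one has $\sgnstr(n)=-1$, so the first numerator parameter of the hypergeometric in \eqref{Q} is $s+\sgnstr(n)\kappa=0$; by \eqref{eqn:2F10} this ${}_2F_1$ collapses to $1$, and the Gamma quotient simplifies via $\Gamma(2\kappa)/\Gamma(2\kappa)=1$ and $\Gamma(-n)=(-n-1)!$ to the advertised constant. For \eqref{eqn:Qs=kappapos} with $n\geq 0$, dividing by $\Gamma(s-\kappa)$ removes the simple pole of $\mathcal{Q}^n_{s,\kappa}$ at $s=\kappa$, and the limiting hypergeometric is ${}_2F_1(2\kappa,-n;2\kappa;1-r^2)=r^{2n}$ by the same $(1-Z)^{-a}$ identity, producing the stated formula. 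The main obstacle is purely bookkeeping: the case split on the sign of $n$ via $\sgnstr(n)$ and the tracking of the $(-1)^\kappa$ that appears when passing between $|z-\overline\z|^{2\kappa}$ and $(z-\overline\z)^{2\kappa}(\z-\overline z)^\kappa$; once organized, each individual manipulation is a one-line hypergeometric identity.
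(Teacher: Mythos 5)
Your proposal is correct and follows essentially the same route as the paper: both rest on the identity $y^{-\kappa}\bigl(\tfrac{z-\overline{\z}}{\z-\overline{z}}\bigr)^{-\kappa}(1-r_{\z}^2)^{\kappa}=(-4\y)^{\kappa}(z-\overline{\z})^{-2\kappa}$, the Euler transformation together with ${}_2F_1(a,0;c;Z)=1$ (your direct use of ${}_2F_1(a,c;c;Z)=(1-Z)^{-a}$ is the same computation), and recognition of the surviving hypergeometric via \eqref{bF} and Lemma \ref{bet0lem}\,(2). The case analysis on $\sgnstr(n)$ and the constant bookkeeping you describe match the paper's proof step for step.
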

\indent
We next define certain Poincar\'{e} series considered by Fay. For this, we set (see \cite[(44)]{Fa}, slightly modified)
\begin{equation}\label{GFay}
\mathcal{G}^{m,n}_{s,\kappa}(z,\z):=c_{s,\kappa}^{m,n}y^{\kappa+n}\sum_{M\in\operatorname{SL}_2(\mathbb{Z})}\left. g_{s, \kappa}^{m,n}(z, \z)\right|_{2\kappa+2n, z}M
\end{equation}
with
\begin{equation}\label{eqn:cmndef}
c_{s,\kappa}^{m,n}:=(-1)^n
\begin{cases}
1 &\text{if }mn\geq0,\\
 \frac{\Gamma(s+\sgnstr(n)\kappa+\ell)\Gamma(s-\sgnstr(n)\kappa)}{\Gamma(s+\sgnstr(n)\kappa)\Gamma(s-\sgnstr(n)\kappa-\ell)}&\text{if }mn<0,\\
\end{cases}
\end{equation}
where $\ell:=\min\left(|m|,|n|\right)$ and
\begin{equation}\label{gFay}
g^{m,n}_{s,\kappa}(z, \z):=  y^{-\kappa-n}\left(\frac{\z-\overline{z}}{z-\overline{\z}}\right)^{\kappa+n}\mathcal{Q}^{-m-n}_{s, \kappa+n}(z,\z)e^{-i(m+n)\theta_{\mathfrak{z}}(z)}.
\end{equation}
\begin{remarks}
\noindent

\noindent
\begin{enumerate}[leftmargin=*,label={\rm(\arabic*)}]
\item
Note that if $c_{s,\kappa_2}^{m_2,n_2}=0$, then we multiply both sides of \eqref{GFay} by an appropriate factor to cancel the simple poles occurring in the $\Gamma$-factors and then take the limit, as in Lemma \ref{elreFay} (3).

\item
The functions $\mathcal{G}^{m,n}_{s,\kappa}$ satisfy the symmetry relations
\begin{align}
\label{eqn:Gshift}
\mathcal{G}^{m_1,n_1}_{s,\kappa_1}&
=\frac{c_{s,\kappa_1}^{m_1,n_1}}{c_{s,\kappa_2}^{m_2,n_2}}
 \mathcal{G}^{m_2,n_2}_{s,\kappa_2} \qquad\qquad\text{ if }\kappa_1+n_1=\kappa_2+n_2 \text{ and } m_1+n_1=m_2+n_2,\\
\label{eqn:Gconj}
\overline{\mathcal{G}^{m_1,n_1}_{s,\kappa_1}}&
=\frac{c_{\overline{s},\kappa_1}^{m_1,n_1}}{c_{\overline{s},\kappa_2}^{m_2,n_2}}
\mathcal{G}^{m_2,n_2}_{\overline{s},\kappa_2} \qquad\qquad\text{ if }\kappa_1+n_1=-\kappa_2-n_2 \text{ and } m_1+n_1=-m_2-n_2.
\end{align}
\end{enumerate}
\end{remarks}
Fay related these functions to the resolvent kernel $\mathcal{G}_{s,\kappa}:=\mathcal{G}_{s,\kappa}^{0,0}$.
\begin{theorem}{~\rm(Fay \cite[Theorem 2.1]{Fa})}\label{raise G}
For $\operatorname{Re}(s)>1, \mathfrak{z}\mapsto\mathcal{G}^{m,n}_{s,\kappa}(z, \z)\in \mathcal{F}_{-\kappa+m,s}$ and $z\mapsto\mathcal{G}^{m,n}_{s,\kappa}(z, \z)\in \mathcal{F}_{\kappa+n,s}$. We have, for $m,n\in\N_0$,
$$
\mathcal{G}^{m,n}_{s, \kappa}(z, \z)=
\mathcal{K}^m_{-\kappa, \z}
\circ\mathcal{K}^n_{\kappa, z}\left(\mathcal{G}_{s, \kappa}(z, \z)\right).
$$
If $m \text{ or }n<0$, then we replace $\mathcal{K}^j_{\ell, w}$ by $\mathcal{L}^{-j}_{\ell, w}$.
\end{theorem}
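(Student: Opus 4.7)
The plan is to establish, in order, absolute convergence of the series, the two modularity properties, the eigenvalue equations, and finally the raising identity; the strategy throughout is to prove each statement on the seed $g^{m,n}_{s,\kappa}$ and then pass it to the averaged sum by commuting the relevant operator with the slash. For convergence, the behaviour of $\mathcal{Q}^{-m-n}_{s,\kappa+n}(z,\z)$ near $r_{\z}(z)=0$ and near $r_{\z}(z)=1$ can be read off from \eqref{Q} and the Euler transformation \eqref{Euler}: the factor $(1-r^2)^s$ shows that the seed decays like a power of $e^{-d(z,\z)}$ as the hyperbolic distance tends to infinity, so the standard Selberg-type bound for $\sum_{M} \im(Mz)^{\re(s)}$ converging when $\re(s)>1$ yields locally uniform absolute convergence on the complement of the $\SL_2(\Z)$-orbit of $\z$.

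Modular transformation in $z$ is immediate by reindexing the sum: for $M_0\in\SL_2(\Z)$, writing $M=M'M_0$ combines the slash $|_{2\kappa+2n,z}M_0$ with the prefactor $y^{\kappa+n}$ to produce the required factor $((cz+d)/(c\overline{z}+d))^{\kappa+n}$ outside the sum. Modular transformation in $\z$ uses the $\SL_2$-invariance \eqref{eqn:dinv} of the hyperbolic distance (and hence of $r_\z$) together with the parallel-transport behaviour of $\theta_\z$ under the simultaneous action $z\mapsto M_0 z$, $\z\mapsto M_0\z$ (derived from $X_{M_0\z}(M_0 z) = ((c\overline{z}+d)/(cz+d)) X_\z(z) \cdot ((c\overline{\z}+d)/(c\z+d))^{-1}$); reindexing $M\mapsto MM_0^{-1}$ then produces the correct factor $((c\z+d)/(c\overline{\z}+d))^{-\kappa+m}$. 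The eigenvalue equation $\mathcal{D}(g)=s(s-1)g$ in either variable (with the appropriate weight) holds for the seed because $\mathcal{Q}^{n}_{s,\kappa}e^{in\theta_\z}$ is tailor-made to satisfy the associated hypergeometric ODE, and $\mathcal{D}$ commutes with the slash operator (as quoted just before the definition of $\mathcal{F}_{\kappa,s}$); hence the equation passes to $\mathcal{G}$ term by term.

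The heart of the theorem is the raising identity, which I would prove by induction on $(m,n)\in\N_0^2$, starting from the tautology $\mathcal{G}^{0,0}_{s,\kappa}=\mathcal{G}_{s,\kappa}$. The inductive step reduces to the seed-level identities
\[
\mathcal{K}_{\kappa+n,z}\bigl(g^{m,n}_{s,\kappa}(z,\z)\bigr)=\frac{c_{s,\kappa}^{m,n}}{c_{s,\kappa}^{m,n+1}}\,g^{m,n+1}_{s,\kappa}(z,\z),
\]
and its $\z$-analogue for $\mathcal{K}_{-\kappa+m,\z}$. Once these are in hand, one applies the raising operator to each term of the Poincar\'e series and uses $\mathcal{K}_{\kappa+n,z}(F|_{2\kappa+2n,z}M)=\mathcal{K}_{\kappa+n,z}(F)|_{2\kappa+2n+2,z}M$ to push the derivative through the sum. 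The cases with $m<0$ or $n<0$ are handled identically after replacing ``raising'' by ``lowering'' throughout and invoking the contiguous relations for ${}_2F_1$ that decrease the hypergeometric index; alternatively, one can apply the factorisation $\mathcal{L}\circ\mathcal{K}=\mathcal{D}-\kappa(\kappa-1)$ from \eqref{DLK} together with the eigenvalue equation to invert raising formally. The main obstacle is the seed-level identity together with matching the normalisation: the contiguous relations for ${}_2F_1$ produce Pochhammer ratios that must be tracked term-by-term against the Gamma quotient \eqref{eqn:cmndef}, and the sign regimes $mn\geq 0$ and $mn<0$ have to be verified case by case, which is where the particular form of $c_{s,\kappa}^{m,n}$ is essential.
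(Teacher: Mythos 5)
This theorem is quoted from Fay \cite[Theorem 2.1]{Fa}; the paper supplies no proof of its own, so the only fair comparison is with Fay's original argument, the relevant pieces of which the paper does import later (his formula (14) appears as \eqref{Kl} and his contiguous relations (18) as \eqref{eqn:KP} and \eqref{eqn:KQ}). Measured against that, your outline is essentially a correct reconstruction of Fay's proof rather than a new route: convergence for $\re(s)>1$ from the decay $\left(1-r_{\z}^2(z)\right)^s \sim 4^s e^{-s\,d(z,\z)}$ together with the lattice counting bound, modularity by reindexing, the eigenvalue equation checked on the seed and transported through the slash, and the raising identity proved by induction from seed-level contiguous relations, with the constant \eqref{eqn:cmndef} engineered precisely to absorb the factors $e_{s,\kappa}(n)$ and $d_{s,\kappa}(n)$. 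Your closing remark that the real work is the case-by-case bookkeeping of the Pochhammer ratios in the regimes $mn\geq 0$ and $mn<0$ is accurate.

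Two details need repair before the sketch becomes a proof. First, your transformation formula for $X$ is wrong: a direct computation gives $X_{M_0\z}(M_0z)=\frac{c\overline{\z}+d}{c\z+d}\,X_{\z}(z)$ with no $z$-dependent factor, since the $(cz+d)$ contributions cancel between $M_0z-M_0\z$ and $M_0z-M_0\overline{\z}$; consequently $e^{i\theta_{M_0\z}(M_0z)}$ picks up only the unimodular phase $\frac{c\overline{\z}+d}{c\z+d}$ (consistent with $r_{M_0\z}(M_0z)=r_{\z}(z)$ from \eqref{eqn:dinv}), and the reindexing that establishes modularity in $\z$ is by left multiplication of the summation variable by $M_0$, not $M\mapsto MM_0^{-1}$, which is the reindexing appropriate to the $z$-variable. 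Second, pushing $\mathcal{K}_{\kappa+n,z}$ or $\mathcal{L}_{\kappa+n,z}$ through the infinite sum requires locally uniform convergence of the differentiated series; this does hold for $\re(s)>1$ away from the orbit of $\z$ by the same counting estimate, but it should be stated. Neither point changes the architecture of the argument.
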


Fay also considered elliptic expansions of functions in $\mathcal{F}_{\kappa,s}$.
\begin{proposition}{~\rm(Fay \  \cite[Theorem 1.1]{Fa})}\label{Fay expand}
  If $\mathcal{D}_\kappa(g)=s(s-1)g$ in some annulus $A:r_1<d(z,\varrho)<r_2$ around $\varrho\in\mathbb{H}$, then $g$ has an elliptic expansion of the shape
\[
g(z)=\left(\frac{z-\overline{\varrho}}{\varrho-\overline{z}}\right)^{-\kappa}\sum_{n\in\mathbb{Z}}\left(c_{\varrho}(n)\mathcal{P}^n_{s, \kappa}(z,\varrho)+d_{\varrho}(n)\mathcal{Q}^n_{s, \kappa}(z,\varrho)\right)e^{in\theta_{\varrho}(z)}.
\]
\end{proposition}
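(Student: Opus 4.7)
The plan is to use separation of variables in hyperbolic polar coordinates centered at $\varrho$. Writing $X_\varrho(z)=re^{i\theta}$ with $r=r_\varrho(z)\in[0,1)$ and $\theta=\theta_\varrho(z)\in\R/2\pi\Z$ gives coordinates in which the annulus $A$ becomes a Euclidean annulus $\{\tanh(r_1/2)<r<\tanh(r_2/2)\}$ in the disk. The stabilizer of $\varrho$ in $\SL_2(\R)$, which is conjugate to $\operatorname{SO}(2)$, acts on $\H$ by preserving $r$ and shifting $\theta$, and on such elements the automorphy factor $\left(\tfrac{cz+d}{c\overline{z}+d}\right)^{\kappa}$ becomes a pure phase in $\theta$. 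This makes it natural to ``de-weight'' $g$ by setting $h(z):=\left(\tfrac{z-\overline{\varrho}}{\varrho-\overline{z}}\right)^{\kappa}g(z)$ and then Fourier expand $h$ in the angular variable $\theta$.

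First, I would compute $\mathcal{D}_\kappa$ in the coordinates $(r,\theta)$ after conjugating by the prefactor $\left(\tfrac{z-\overline{\varrho}}{\varrho-\overline{z}}\right)^{-\kappa}$. Elliptic regularity for $\mathcal{D}_\kappa$ together with $\mathcal{D}_\kappa(g)=s(s-1)g$ implies that $g$ is real analytic on $A$, so $h$ admits an absolutely convergent Fourier expansion $h(z)=\sum_{n\in\Z}a_n(r)e^{in\theta}$ on $A$. The invariance of $\mathcal{D}_\kappa$ under the stabilizer of $\varrho$, combined with the de-weighting, forces the conjugated operator to have coefficients depending only on $r$, with the $\theta$-dependence entering only through $\partial_\theta$. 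Substituting the Fourier expansion into the eigenvalue equation then separates variables and yields, for each $n\in\Z$, a second-order linear ODE for $a_n(r)$.

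Next, I would identify this ODE. After substituting $w=r^2$ and factoring out $r^{|n|}(1-r^2)^{s}$ (which encodes the Frobenius exponents at the regular singular points $w=0$ and $w=1$), the equation reduces to the Gauss hypergeometric equation with parameters $a=s-\sgnstr(n)\kappa$, $b=s+\sgnstr(n)\kappa+|n|$, and $c=1+|n|$. The solution regular at $w=0$ with leading behavior $r^{|n|}$ is precisely $\widehat{\mathcal{P}}^n_{s,\kappa}(r)$ as defined in \eqref{P}, while the second linearly independent solution, distinguished by its $(1-r^2)$-expansion and leading behavior $r^{-|n|}$ at $w=0$, is, up to the normalization in \eqref{Q}, $\widehat{\mathcal{Q}}^n_{s,\kappa}(r)$. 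Since these two solutions are linearly independent on the annulus, each $a_n(r)$ is uniquely expressible as $c_\varrho(n)\widehat{\mathcal{P}}^n_{s,\kappa}(r)+d_\varrho(n)\widehat{\mathcal{Q}}^n_{s,\kappa}(r)$, and reinserting the prefactor recovers the claimed expansion.

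The main obstacle is the coordinate calculation together with the bookkeeping of the hypergeometric parameters, particularly the sign factor $\sgnstr(n)$. A convenient way to handle it is to treat $n\geq 0$ and $n<0$ separately, so that $|n|$ and $\sgnstr(n)$ become transparent, and then invoke the symmetries $\mathcal{P}^n_{s,\kappa}=\mathcal{P}^{-n}_{s,-\kappa}$ and $\mathcal{Q}^n_{s,\kappa}=\mathcal{Q}^{-n}_{s,-\kappa}$ noted just after \eqref{Q}. Convergence of the full expansion on the annulus then follows from the joint absolute convergence of the Fourier series for $h$ together with the hypergeometric series defining $\widehat{\mathcal{P}}$ (in $r^2$) and $\widehat{\mathcal{Q}}$ (in $1-r^2$) on their respective domains, which together cover $0<r<1$.
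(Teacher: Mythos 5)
The paper offers no proof of this proposition at all --- it is imported verbatim from Fay's Theorem 1.1 --- and your separation-of-variables argument in geodesic polar coordinates is exactly the standard proof (and essentially Fay's own): de-weight by the prefactor, Fourier-expand in $\theta$ using the commutation of $\mathcal{D}_\kappa$ with the stabilizer of $\varrho$, and identify the radial ODE as hypergeometric with $\widehat{\mathcal{P}}^n_{s,\kappa}$ and $\widehat{\mathcal{Q}}^n_{s,\kappa}$ as the Frobenius solutions normalized at $r=0$ and $r=1$ respectively. The one point worth flagging is that these two solutions are linearly independent only away from the exceptional parameters $s\in\pm\kappa-\N_0$, where $\mathcal{Q}^n_{s,\kappa}$ acquires poles (as the paper notes after \eqref{Q}); this does not affect the values of $s$ at which the proposition is applied here, but a fully general proof should exclude or handle those cases by a limiting argument.
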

The proof of Proposition \ref{prop:elliptic}, which we give in the next section, mostly relies on rewriting Fay's functions $\mathcal{P}_{s,\kappa}^n$ and $\mathcal{Q}_{s,\kappa}^n$.

\section{Special functions and elliptic expansions}\label{sec:ellexp}
To prove Proposition \ref{prop:elliptic} we write the elliptic expansion in terms of Fay's, which is done in Lemma \ref{elreFay} (1).
\begin{proof}[Proof of Lemma \ref{elreFay}]
(1) Throughout, we use the fact that, with $r:=r_{\z}(z)$, we have
\begin{equation}\label{eqn:y1-r^2}
y^{-\kappa}  \left(\frac{z-\overline{\z}}{\z-\overline{z}}\right)^{-\kappa} \left(1-r^2\right)^\kappa = (-4\y)^{\kappa}
(z-\overline{\z})^{-2\kappa}.
\end{equation}
For $n\geq0$, equation~\eqref{Pid} follows from the definition, using \eqref{Euler}, \eqref{eqn:2F10}, and \eqref{eqn:y1-r^2}.  If $n<0$ then, using \eqref{Euler}, \eqref{eqn:y1-r^2}, and
abbreviating $X:=X_{\z}(z)=r e^{i\theta}$ with $\theta:=\theta_{\z}(z)$, the left-hand side of \eqref{Pid} equals
\[
\left(-4\y\right)^{\kappa} X^n(z-\overline{\mathfrak{z}})^{-2\kappa} r^{-2n}  {}_2F_1\left(-n,2\kappa;1-n;r^2\right).
\]
Since $2\kappa\leq 0$ and $-n>0$, the claim follows from Lemma \ref{bet0lem}\ (2).

We next prove \eqref{Qid}. The claim for $n \geq 0$ follows from the definition using \eqref{Euler}, \eqref{bF}, and \eqref{eqn:y1-r^2}.  For $n<0$, the claim follows by \eqref{bF} and \eqref{eqn:y1-r^2}.

\noindent(2) From the definition of $\mathcal{Q}_{s, \kappa}^n(z, \z)$, the left-hand side of \eqref{eqn:Qs=kappa} equals
\[
-\frac{\Gamma(2\kappa)\Gamma(-n)}{4\pi \Gamma(2\kappa)} y^{-\kappa}\left(\frac{z-\overline{\z}}{\z-\overline{z}}\right)^{-\kappa}  r^n \left(1-r^2\right)^\kappa {}_2F_1\left(0, 2\kappa+n; 2\kappa; 1-r^2\right)e^{in\theta}.
\]
Once again using $re^{i\theta}=X$
and \eqref{eqn:2F10}, we obtain
\[
-\frac{(-n-1)!}{4\pi} y^{-\kappa}\left(\frac{z-\overline{\z}}{\z-\overline{z}}\right)^{-\kappa} X^n \left(1-r^2\right)^\kappa.
\]
By \eqref{eqn:y1-r^2}, we then obtain the claim.

\noindent
(3)
The left-hand side of \eqref{eqn:Qs=kappapos} equals
\[
-\mathop{\lim}_{s \rightarrow \kappa}  \frac{\Gamma(s-\kappa)\Gamma(s+\kappa+n)}{\Gamma(s-\kappa)4\pi \Gamma(2s)}
y^{-\kappa}\left(\frac{z-\overline{\z}}{\z-\overline{z}}\right)^{-\kappa} r^{-n} \left(1-r^2\right)^s {}_2F_1\left(s+\kappa, s-\kappa-n; 2s; 1-r^2\right)e^{in\theta}.
\]
Canceling $\Gamma(s-\kappa)$, using $X=re^{i\theta}$, taking the limit, employing \eqref{eqn:y1-r^2}, and plugging in the definition of the ${_2F_1}$, we obtain
\begin{equation}\label{eqn:elreFay3}
-\frac{\Gamma(2\kappa+n)\!\left(-4\y\right)^{\kappa}}{4\pi\Gamma(2\kappa)}
(z-\overline{\z})^{-2\kappa}
\frac{X^n}{r^{2n}}
{_2F_1}\!\left(2\kappa,-n;2\kappa;1-r^2\right).
\end{equation}
We obtain the desired identity by using 15.4.6 of \cite{NIST} to evaluate
\[
{_2F_1}\!\left(2\kappa,-n;2\kappa;1-r^2\right)=r^{2n}.
\]
\end{proof}

We next combine Lemma \ref{elreFay} (1) with Fay's elliptic expansion in Proposition \ref{Fay expand} to obtain Proposition \ref{prop:elliptic}.

\begin{proof}[Proof of Proposition \ref{prop:elliptic}]

With $G(z):=y^\kappa F(z)$, we have, by \eqref{DDel},
$$
\mathcal{D}_\kappa\left(G(z)\right)=-y^\kappa \Delta_{2\kappa}\left(F(z)\right)-\kappa(1-\kappa)G(z)=\kappa(\kappa-1)G(z)
$$
since $\Delta_{2\kappa}(F)=0$ by assumption.  Thus, by Proposition \ref{Fay expand} with $s=1-\kappa$,
$$
G(z)=\left(\frac{z-\overline{\varrho}}{\varrho-\overline{z}}\right)^{-\kappa}\sum_{n\in\mathbb{Z}} \left(c_\varrho(n)) \mathcal{P}^n_{1-\kappa, \kappa}(z, \varrho)+d_\varrho(n)\mathcal{Q}^n_{1-\kappa, \kappa}(z,\varrho)\right)e^{in
\theta_{\varrho}(z)}.
$$
By Lemma \ref{elreFay} (1) and \eqref{eqn:beta0def}, this gives
\[
F(z)=
\left(z-\overline{\varrho}\right)^{-2\kappa}
\sum_{n\in\mathbb{Z}}\alpha_{\varrho}(n)
X_{\varrho}^n(z)+\left(z-\overline{\varrho}\right)^{-2\kappa}
\sum_{n\in\mathbb{Z}}\gamma_{\varrho}(n)\beta_0\left(1-
r_{\varrho}^2(z);
1-2\kappa, -n\right)
X_{\varrho}^n(z).
\]
for some constants $\alpha_{\varrho}(n),\gamma_{\varrho}(n)\in\C$. Rewriting yields the expansion \eqref{eqn:elliptic} up to the restrictions on $n$ in each of the sums.
It thus remains to show that $c_{F,\varrho}^+(n)=0$ for $n<-n_0$ and $c_{F,\varrho}^-(n)=0$ for $n>n_0$.  To do so, we investigate the asymptotic growth of each term in the sum as $z\to \varrho$.  We repeatedly use the fact that, as $z\to \varrho$, $X_{\varrho}(z)\sim_{\eta}z-\varrho$, where by $G_1(z,\varrho)\sim_{\eta}G_2(z,\varrho)$ we mean that there is a constant $C_{\eta}\neq 0$ depending only on $\eta$ such that $\lim_{z\mapsto \varrho}\frac{G_1(z,\varrho)}{C_{\eta}G_2(z,\varrho)}=1$.  This gives that $n\geq -n_0$ for the first summand in \eqref{eqn:elliptic}.

Moreover, by Lemma \ref{bet0lem},
\[
\beta_0\left(1-r^2;1-2\kappa, -n\right)=\sum_{\substack{0\leq j\leq -2\kappa\\ j\neq n}} \binom{-2\kappa}{\hspace*{1.5mm}j}\frac{(-1)^{j+1}}{j-n}r^{2(j-n)}+2\delta_{0\leq n\leq-2\kappa}\binom{-2\kappa}{\hspace*{1.5mm}n}(-1)^{n+1}\log\left(r\right).
\]
Thus, again using $X_{\varrho}^n(z)\sim_{\eta} (z-\varrho)^n$, we have, as $z\to \varrho$,
\begin{align}
\nonumber	\beta_0\left(1-r_{\varrho}^2(z); 1-2\kappa, -n\right)X_{\varrho}^n(z) &\sim_{\eta} r_{\varrho}^{-2n}(z)X_{\varrho}^n(z)+\delta_{0\leq n\leq-2\kappa}\log\left(r_{\varrho}(z)\right)X_{\varrho}^n(z)\\
	&\sim_{\eta} (z-\varrho)^{-n}+\delta_{0\leq n\leq-2\kappa}(z-\varrho)^n\operatorname{Log}(z-\varrho).\label{eqn:beta0asymptotic}
\end{align}
Furthermore, for $0<n<n_{\kappa}$, since $X_{\varrho}^n(z)\to 0$ as $z\to \varrho$, the asymptotic in \eqref{eqn:beta0asymptotic} implies that we also have
\[
\beta\left(1-r_{\varrho}^2(z);1-2\kappa,-n\right)X_{\varrho}^n(z)=\beta_0(1-r_{\varrho}^2(z);1-2\kappa,-n)X_{\varrho}^n(z)+C_{1-2\kappa,-n}X_{\varrho}^n(z)\sim_{\eta}(z-\varrho)^{-n}.
\]
This gives the claimed bounds for $n\neq 0$.  Finally the $n=0$ term behaves like $\operatorname{Log} (z-\varrho)$ by \eqref{eqn:beta0asymptotic}. This growth is cancelled upon multiplying by $r_{\varrho}^{n_0}(z)$.

\noindent
(2) By  \eqref{eqn:rdef} and \eqref{eqn:dinv},
for $M$ in the stabilizer $\Gamma_{\varrho}\subset \SL_2(\Z)$ of $M$
\begin{equation}\label{eqn:rinv}
r_{\varrho}(Mz)=r_{M\varrho}(Mz)=r_{\varrho}(z).
\end{equation}
One concludes the claim by \cite[(2a.16)]{Pe1}.

\end{proof}

We next compute the action of differential operators on elliptic expansions  in Proposition \ref{prop:elliptic}.
\begin{proof}[Proof of Proposition \ref{Explicit}]

We first note that, by Proposition \ref{prop:LRKL} (1),
\begin{equation}\label{DK}
D^{2k-1}\left(F(z)\right)=(-4\pi)^{1-2k}y^{-k}\mathcal{K}_{1-k}^{2k-1}\left(G(z)\right).
\end{equation}
We rewrite the right-hand side of \eqref{DK} in terms of the iterated operators (for $\ell\in\N_0$
and $\kappa\in\Z$)
\begin{align*}
\widehat{\mathcal{K}}_{
\kappa,
n}^{\ell}:= \widehat{\mathcal{K}}_{
\kappa
+\ell-1,n+1-\ell}\circ\cdots\circ\widehat{\mathcal{K}}_{
\kappa,
n},\quad
\widehat{\mathcal{L}}_{
\kappa,
n}^{\ell}:= \widehat{\mathcal{L}}_{
\kappa
+1-\ell,n+\ell-1}\circ\cdots\circ\widehat{\mathcal{L}}_{
\kappa,
n},
\end{align*}
where (see \cite[after formula (18)]{Fa})
\begin{align*}
\widehat{\mathcal{K}}_{\kappa,n}:= \frac{1}{2}\left(1-r^2\right)\frac{\partial}{\partial r} + \left( \frac{2nr}{1-r^2} - \kappa r\right),\quad
\widehat{\mathcal{L}}_{\kappa,n}:= \frac{1}{2}\left(1-r^2\right)\frac{\partial}{\partial r} - \left( \frac{2nr}{1-r^2} - \kappa r\right).
\end{align*}
Namely, using (see \cite[(14)]{Fa}) that for $f: \R_0^+\to\C$
\begin{multline*}
 \mathcal{K}_{\kappa}\left(\left(\frac{z-\overline{\varrho}}{\varrho-\overline{z}}\right)^{-\kappa}f\!\left(
r_{\varrho}(z)
\right)e^{in
\theta_{\varrho}(z)
}\right)\\
=\left(\frac{z-\overline{\varrho}}{\varrho-\overline{z}}\right)^{-\kappa-1}e^{-i
\theta_{\varrho}(z)
}	\left[\left(\frac{1}{2}\left(1 - r^2\right)\frac{\partial}{\partial r} -\kappa r- \frac{2 i r}{1-r^2}\frac{\partial}{\partial \theta}\right)e^{in\theta}f(r)\right]_{
\substack{r=r_{\varrho}(z),\\ \theta=\theta_{\varrho}(z)}
}
\end{multline*}
and iteratively carrying out the differentiation on $\theta$ yields
\begin{equation}\label{Kl}
\mathcal{K}_\kappa^{\ell}\left(\left(\frac{z-\overline{\varrho}}{\varrho-\overline{z}}\right)^{-\kappa}f\!\left(
r_{\varrho}(z)
\right)e^{in
\theta_{\varrho}(z)
}\right)=\left(\frac{z-\overline{\varrho}}{
\varrho
-\overline{z}}\right)^{-\kappa-\ell}e^{i(n-\ell)
\theta_{\varrho}(z)
}\left[\widehat{\mathcal{K}}_{\kappa,n}^{\ell}\left(f(r)\right)\right]_{
r=r_{\varrho}(z)
}.
\end{equation}

\hspace{-10pt}By \eqref{DK} and \eqref{Pid}, we thus have, for $n\geq 0$,
\begin{align}
\nonumber &D_z^{2k-1}\left(
\left(z-\overline{\varrho}\right)^{2k-2}
X_{\varrho}^{n}(z)\right)=(-4\pi)^{1-2k}y^{-k}\left(-4\eta\right)^{k-1}\mathcal{K}_{1-k,z}^{2k-1}\left(\left(\frac{z-\overline{\varrho}}{\varrho-\overline{z}}\right)^{k-1}\mathcal{P}_{k,1-k}^n(z,\varrho)e^{in\theta_{\varrho}(z)
}\right)\\
\label{eqn:DXdisc} &=(-4\pi)^{1-2k}y^{-k}\left(\frac{z-\overline{\varrho}}{\varrho-\overline{z}}\right)^{-k}e^{i(n+1-2k)
\theta_{\varrho}(z)
}\left(-4\eta\right)^{k-1}\left[\widehat{\mathcal{K}}_{1-k, n}^{2k-1}\left(\widehat{\mathcal{P}}^n_{k,1-k}(r)\right)\right]_{r=r_{\varrho}(z)}.
\end{align}

By \cite[(18)]{Fa}, we know that
\begin{align}
\label{eqn:KP}
\widehat{\mathcal{K}}_{\kappa, n}\left(\widehat{\mathcal{P}}_{s, \kappa}^n(r)\right)
&=
e_{s,\kappa}(n)\widehat{\mathcal{P}}_{s, \kappa+1}^{n-1}(r),\\
\label{eqn:KQ}\widehat{\mathcal{K}}_{\kappa, n}\left(\widehat{\mathcal{Q}}_{s, \kappa}^n(r)\right)&=d_{s,\kappa}(n)\widehat{\mathcal{Q}}_{s, \kappa+1}^{n-1}(r),
\end{align}
where
\begin{align*}
e_{s,\kappa}(n)
&:=
\begin{cases}
n&\qquad \text{ if } n\geq 1,\\
\frac{(s+\kappa)(s-\kappa-1)}{1+|n|}&\qquad \text{ if } n\leq 0,
\end{cases}
\qquad\quad
d_{s,\kappa}(n)
&:=
\begin{cases}
-(s+\kappa)(s-\kappa-1)&\qquad \text{ if } n\geq 1,\\
-1&\qquad \text{ if } n\leq 0.
\end{cases}
\end{align*}

Plugging \eqref{eqn:KP} into the right-hand side of \eqref{eqn:DXdisc}
simplifies to
\begin{equation}\label{eqn:DX2}
\frac{\prod_{j=1}^{2k-1}e_{k,j-k}(n+1-j)}{(-4\pi)^{2k-1}(-4\eta)^{1-k}}y^{-k}\left(\frac{z-\overline{\varrho}}{\varrho-\overline{z}}\right)^{\kappa-1}e^{i(n+1-2k)
\theta_{\varrho}(z)
} \mathcal{P}_{k,k}^{n+1-2k}(z,\varrho).
\end{equation}
We split into the cases $n\geq 2k-1$ and $
n<2k-1$.

For $n\geq 2k-1$ we have, using \eqref{eqn:2F10},
\[
\mathcal{P}_{k,k}^{n+1-2k}(z,
\varrho) =
r_{\varrho}^{n+1-2k}(z)
\!\left(1-
r_{\varrho}^{2}(z)
\right)^{k}{_2F_1}\!\left(0,1+n; n+2-2k;
r_{\varrho}^2(z)
\right)=
r_{\varrho}^{n+1-2k}(z)
\!\left(1-
r_{\varrho}^{2}(z)
\right)^{k}.
\]
Thus \eqref{eqn:DX2} becomes, using that $
r_{\varrho}(z)e^{i\theta_\varrho(z)}=X_{\varrho}(z)
$ and \eqref{eqn:y1-r^2},
\[
\pi^{1-2k}\eta^{2k-1}\prod_{j=1}^{2k-1}e_{k,j-k}(n+1-j)(z-\overline{\varrho})^{-2k}X_{\varrho}^{n+1-2k}(z).
\]
Explicitly computing the constant then finishes the claim for $n\geq 2k-1$.
For $0\leq n\leq 2k-2$,
we have $e_{k,k-1}(n+2k-2)= 0$, giving the claim in this range.

We next act by $D_z^{2k-1}$ on the non-meromorphic part of $F$.  First assume that $n\notin [0,2k-2]$. By Lemma \ref{bet0lem} (1), we then have,
using that $r_{\varrho}^2(z)=X_{\varrho}(z)\overline{X_{\varrho}(z)}$,
\[
(z-\overline{\varrho})^{2k-2} \beta_0\!\left(1-r_{\varrho}^2(z); 2k-1,-n\right)X_{\varrho}^n(z)=\sum_{0\leq j\leq 2k-2} \binom{2k-2}{j}\frac{(-1)^{j+1}}{j-n} (z-\overline{\varrho})^{2k-2}X_{\varrho}^{j}(z) \overline{X_{\varrho}^{j-n}(z)}.
\]
This is a polynomial in $z$ of degree at most $2k-2$ (with antiholomorphic coefficients depending on $\overline{z}$).  Differentiating $2k-1$ times hence annihilates these terms.

It remains to determine the image of $D^{2k-1}$ on the terms in the non-meromorphic part with $0\leq n\leq 2k-2$.
Using \eqref{Qid}, \eqref{DK}, \eqref{Kl}, \eqref{eqn:KQ}, and \eqref{eqn:Qs=kappa}, we obtain that
\begin{equation*}
\begin{split}
&D_z^{2k-1}\left((z-\overline{\varrho})^{2k-2} \beta\!\left(1-r_{\varrho}^2(z);2k-1,-n\right)X_{\varrho}^n(z)
\right)\\
 &\qquad=\frac{(2k-2-n)!\prod_{j=1}^{2k-1}d_{k,j-k}(n+1-j)}{a_n(-4\pi)^{2k}
\eta
^{1-2k}(-4)^{-k}}(z-\overline{\varrho})^{-2k} X_\varrho^{n+1-2k}(z).
\end{split}
\end{equation*}
Computing
\[
\prod_{j=1}^{2k-1}d_{k,j-k}(n+1-j)=
-\frac{n!(2k-2)!}{(2k-2-n)!}
\]

and plugging in \eqref{eqn:cndef}, noting that $n\geq 0$,  yields the claimed formula.
\end{proof}

\section{Poincar\'{e} series and the proof of Theorem \ref{OperatorTheorem}}\label{sec:Poincare}
For $\z\in\mathbb{H}$, $n\in\mathbb{Z}$, and $k > 1$, we define the meromorphic Poincar\'{e} series, due to Petersson,
\begin{equation}\label{Psi}
\Psi_{2k, n}^{\z}(z)=\Psi_{2k, n}(z, \z):=\sum_{M\in\operatorname{SL}_2(\mathbb{Z})}\left. \psi_{2k,n}^{\z}(z)\right|_{2k}M,
\end{equation}
where
\[
\psi_{2k,n}^{\z}(z)=\psi_{2k,n}(z,\z):=(z-\overline{\mathfrak{z}})^{-2k}X_{\mathfrak{z}}^n(z).
\]
We use the convention that $\z$ appears as a superscript in the notation if we consider $\z\in\H$ as a fixed point and we write it as a two-variable function if we consider the properties in the $\z$-variable.
The main properties of $\Psi_{2k,n}^{\z}$ needed for this paper are given in the proposition below.
\noindent
\begin{proposition}{~\rm(Petersson \cite[S\"atze 7 and 8]{Pe2} and \cite[Satz 7]{PeEinheit})}\label{Psiprop}
The functions  $\{\Psi_{2k, n}^{\z}: \z\in \H, n\in \Z\}\ ($resp. $\{\Psi_{2k, n}^{\z}: n\in \Z\})$ span $\mathbb{S}_{2k}\ ($resp. $\mathbb{S}_{2k}^{\z})$.
 For $n\geq0$ they are cusp forms. For $n<0$ they are orthogonal to cusp forms and have the principal part $2\omega_{\z} \psi_{2k,n}(z,\z)$ around $z=\z$.
\end{proposition}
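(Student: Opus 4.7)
The plan is to verify convergence, modularity and meromorphy, then the cuspidality claim for $n\geq 0$, the principal-part claim for $n<0$, the orthogonality to cusp forms for $n<0$, and finally the spanning property, working directly from the definition~\eqref{Psi}.

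For convergence, using $(Mz-\overline{\z})(cz+d)=(a-c\overline{\z})z+(b-d\overline{\z})$ I would rewrite each summand as
\[
\psi_{2k,n}^{\z}\big|_{2k}M(z) = \bigl((a-c\overline{\z})z+(b-d\overline{\z})\bigr)^{-2k}X_{\z}^{n}(Mz).
\]
Since $|X_{\z}|\leq 1$ on $\H$ and the orbit $\SL_2(\Z)z$ is discrete in $\H$, only finitely many $M$ send $z$ into a fixed neighborhood of $\z$, so $|X_{\z}(Mz)|^n$ is uniformly bounded outside a finite exceptional set. The remaining terms are controlled by the standard Eisenstein-type bound for $\sum_{M}|(a-c\overline{\z})z+(b-d\overline{\z})|^{-2k}$, which converges for $k>1$. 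Modularity of the sum is automatic, and each summand is meromorphic with its only pole in $\H$ at $M^{-1}\z$, so $\Psi_{2k,n}^{\z}$ is meromorphic on $\H$ with singularities contained in $\SL_2(\Z)\z$.

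To extract the principal part at $z=\z$, I would identify the contributions of the stabilizer $\Gamma_{\z}\subset\SL_2(\Z)$ of $\z$, which produce the only summands singular at $\z$. The relations $(Mz-\overline{\z})=(z-\overline{\z})/((cz+d)(c\overline{\z}+d))$ and $X_{\z}(Mz)=\tfrac{c\overline{\z}+d}{c\z+d}X_{\z}(z)$, valid when $M\z=\z$, yield by a short direct calculation
\[
\psi_{2k,n}^{\z}\big|_{2k}M = \zeta_M^{2(k+n)}\,\psi_{2k,n}^{\z},\qquad \zeta_M:=(c\z+d)^{-1},
\]
where $\zeta_M$ is a root of unity of order dividing $2\omega_{\z}$. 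Hence for $n$ in the admissible residue class modulo $\omega_{\z}$ (the only class in which $\Psi_{2k,n}^{\z}$ is nonzero) this factor equals $1$, and since $|\Gamma_{\z}|=2\omega_{\z}$ the total principal part of $\Psi_{2k,n}^{\z}$ at $z=\z$ is $2\omega_{\z}\,\psi_{2k,n}(z,\z)$, all remaining summands being holomorphic near $\z$. For $n\geq 0$ the seed $\psi_{2k,n}^{\z}$ is holomorphic on $\H$ and satisfies $\psi_{2k,n}^{\z}(z)=O(y^{-2k})$ as $y\to\infty$, so $\Psi_{2k,n}^{\z}$ is a weight $2k$ holomorphic cusp form.

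For $n<0$ orthogonality to $S_{2k}$ follows by the standard unfolding argument: for $f\in S_{2k}$, the regularized inner product $\langle f,\Psi_{2k,n}^{\z}\rangle$ of \cite[equation (3)]{Pe2} unfolds to an integral over $\H$ of $f(z)\overline{\psi_{2k,n}^{\z}(z)}\,y^{2k-2}\,dx\,dy$; passing to $X_{\z}$-coordinates and inserting the elliptic Taylor expansion of $f$ at $\z$, the angular integrals kill every Fourier mode except the one matching $n$, which is in turn killed by the radial integral once the regularization absorbs the divergent contribution at $\z$. Finally, for spanning, given $f\in\mathbb{S}_{2k}^{\z}$ one subtracts the unique finite linear combination of the $\frac{1}{2\omega_{\z}}\Psi_{2k,n}^{\z}$ with $n<0$ matching the principal part of $f$ at $\z$; the difference is holomorphic on $\H$ and vanishes at $i\infty$, hence a cusp form, so it lies in the span of $\{\Psi_{2k,n}^{\z}:n\geq 0\}$ by Petersson's classical result that holomorphic Poincar\'e series span $S_{2k}$. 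For $f\in\mathbb{S}_{2k}$ with inequivalent poles at $\z_1,\dots,\z_r$, iterate at each $\z_j$. The main obstacle is the orthogonality step for $n<0$: the naive Petersson inner product diverges at $\z$, and one must first set up the regularization of \cite[equation (3)]{Pe2} before unfolding can be carried out unambiguously; once this is in place the remaining steps reduce to convergence estimates and careful bookkeeping of poles and stabilizers.
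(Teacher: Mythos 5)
The paper does not actually prove this proposition: it is imported wholesale from Petersson, with the citations \cite[S\"atze 7 and 8]{Pe2} and \cite[Satz 7]{PeEinheit} standing in for the argument. So there is nothing in the text to compare against; what you have written is essentially a reconstruction of Petersson's original proof, and in outline it is sound. The convergence estimate via $\left((a-c\overline{\z})z+(b-d\overline{\z})\right)^{-2k}$, the computation $\psi_{2k,n}^{\z}|_{2k}M=\zeta_M^{2(k+n)}\psi_{2k,n}^{\z}$ for $M$ in the stabilizer (using $|c\z+d|=1$ at an elliptic fixed point), the resulting character sum over $\Gamma_{\z}$ giving the factor $2\omega_{\z}$ exactly when $\omega_{\z}\mid(k+n)$, and the cuspidality for $n\geq 0$ are all correct and are the standard route.

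Two caveats. First, in the orthogonality step your explanation of why the surviving angular mode vanishes is off: after unfolding and passing to the coordinate $X_{\z}$, the mode $m=n<0$ contributes $c_{f,\z}(n)$ times a radial integral that is genuinely divergent at $r=0$; it is not ``killed by the radial integral'' under the regularization, but rather absent from the start because $f\in S_{2k}$ is holomorphic at $\z$ and hence $c_{f,\z}(m)=0$ for all $m<0$ (this is also precisely Petersson's criterion for the regularized inner product to converge at all). A careful write-up must also justify interchanging the unfolding with the limit over the excised balls $\mathcal{B}_{\varepsilon}(\z)$. Second, your spanning argument reduces $\mathbb{S}_{2k}^{\z}$ to $S_{2k}$ and then invokes ``Petersson's classical result that holomorphic Poincar\'e series span $S_{2k}$'' --- but the relevant completeness statement here is that the \emph{elliptic} Poincar\'e series $\{\Psi_{2k,n}^{\z}:n\geq 0\}$ span $S_{2k}$, which is exactly \cite[Satz 7]{PeEinheit}, i.e.\ one of the results the proposition is citing. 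To make the proof self-contained you should close the loop with the same unfolding computation: for $n\geq 0$ it shows $\langle f,\Psi_{2k,n}^{\z}\rangle$ equals an explicit nonzero multiple of $c_{f,\z}(n)$, so a cusp form orthogonal to all $\Psi_{2k,n}^{\z}$ with $n\geq 0$ has vanishing elliptic expansion at $\z$ and is identically zero. With those two repairs the sketch is a complete and correct substitute for the citation.
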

\begin{remarks}
\noindent
\noindent
\begin{enumerate}[leftmargin=*,label={\rm(\arabic*)}]
\item
By Proposition \ref{Psiprop}, the elliptic expansion of $\Psi_{2k,n}^{\z
}$ around $\varrho\in\H$ may be written
\begin{equation}\label{eqn:Psiexpand}
\Psi_{2k,n}^{\z}(z)=\left(z-\overline{\varrho}\right)^{-2k}\left(2\omega_{\z}\delta_{[\varrho]=[\z]} \delta_{n<0}X_{\varrho}^n(z)+ \sum_{\ell\geq 0}c_{2k,\varrho}^{\z}(n,\ell)X_{\varrho}^{\ell}(z)\right),
\end{equation}
where $[\z]$ denotes the $\SL_2(\Z)$-equivalence class of $\z$.
\item
As pointed out in \cite[page 72]{Pe1}, $\z\mapsto \y^{2k+n}\Psi_{2k,n}^{\z}(z)$ is modular of weight $-2k-2n$.  Moreover, it is an eigenfunction under $\Delta_{-2k-2n,\z}$ with eigenvalue $(2k+n)(n+1)$.
\end{enumerate}
\end{remarks}
We next write $\Psi$ as a special case of Fay's function $\mathcal{G}$.
We set
\begin{equation*}
\mathscr{C}_{k,n} :=\begin{cases}
n!(2k-2)! & \textrm{if } n\geq 0,\\
(2k-2-n)! & \textrm{if } n < 0.
\end{cases}
\end{equation*}

\begin{lemma}\label{lem:PsiGrel}
\noindent

\noindent
\begin{enumerate}[leftmargin=*,label={\rm(\arabic*)}]
\item
We have
\begin{equation*}
\lim_{s\to k}\mathcal{G}_{s, 1-k}^{-n, 2k-1}(z, \z)=   \frac{\left(-4\y y\right)^k}{4\pi}
\mathscr{C}_{k,n} \Psi_{2k, n+1-2k}^{\z}(z).
\end{equation*}
\item
If $n\in\N_0$, then
\[
\mathcal{G}_{k,k-1}^{n,1} (z,\z)=\frac{(-4\y y)^{k}}{4\pi} n! \Psi_{2k,-n-1}^{\z}(z).
\]
If $n\in-\N
$, then
\[
\lim_{s\to k} \mathcal{G}_{s,k-1}^{n,1} (z,\z)=\frac{(-4\y y)^{k}}{4\pi} \frac{(2k-2-n)!}{(2k-2)!}\Psi_{2k,-n-1}^{\z}(z).
\]
\end{enumerate}
\end{lemma}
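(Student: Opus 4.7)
The plan is to unfold Fay's Poincar\'e series using its definition \eqref{GFay} and to identify each summand at $s=k$ (or in the limit $s\to k$) as a $z$-independent scalar multiple of Petersson's kernel $\psi_{2k,\cdot}^{\z}$; summing over $\SL_2(\Z)$ then directly produces $\Psi_{2k,\cdot}^{\z}$ via \eqref{Psi}. For part (1), substituting $\kappa=1-k$, $m=-n$, and Fay's index $2k-1$ into \eqref{gFay} presents the summand with inner factor $\mathcal{Q}^{n+1-2k}_{s,k}(z,\z)e^{-i(2k-1-n)\theta_{\z}(z)}$ flanked by $y^{-k}$ and $\left(\frac{\z-\overline{z}}{z-\overline{\z}}\right)^{k}$. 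The key observation is that these flanking factors are exactly the inverses of the ones appearing on the left-hand sides of Lemma \ref{elreFay}(2) and (3); the factors $y^{-k}\cdot y^{k}$ from \eqref{GFay} and Lemma \ref{elreFay}, together with the reciprocal Cayley factors, therefore all cancel, reducing the problem to explicit substitution in a single variable identity.

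First I would treat the range where the upper index of $\mathcal{Q}_{s,k}$ is strictly negative, namely $n\leq 2k-2$ in part (1) and $n\geq 0$ in part (2). In these ranges \eqref{eqn:Qs=kappa} applies at $s=k$ with no singularity, and the summand collapses to a constant times $\psi_{2k,n+1-2k}^{\z}(z)$ (respectively $\psi_{2k,-n-1}^{\z}(z)$). Since this constant depends only on $\z$, the sum over $M\in\SL_{2}(\Z)$ passes through the slash $|_{2k}M$ and yields the desired $\Psi_{2k,\cdot}^{\z}$. I would then compute $c_{s,\kappa}^{m,n}$ at $s=k$ from \eqref{eqn:cmndef}: in part (1) one gets $c=-1$ for $n\leq 0$ and $c=-n!(2k-2)!/(2k-2-n)!$ for $0<n\leq 2k-2$, while in part (2) one gets $c=-1$ for $n\geq 0$. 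Using $(-4\y)^{k}y^{k}=(-4\y y)^{k}$ and a one-line factorial cancellation, these values combine with the $-(2k-n-2)!/(4\pi)$ (resp.\ $-n!/(4\pi)$) produced by Lemma \ref{elreFay}(2) to yield precisely $\mathscr{C}_{k,n}$ (resp.\ $n!$) as claimed.

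In the complementary range, i.e.\ $n\geq 2k-1$ in part (1) and $n\in -\N$ in part (2), the upper index of $\mathcal{Q}_{s,k}$ is non-negative, and by \eqref{Q} the function $\mathcal{Q}^{N}_{s,k}$ carries a simple pole at $s=k$ coming from $\Gamma(s-\kappa)$. A direct inspection of \eqref{eqn:cmndef} shows that $c_{s,\kappa}^{m,n}$ acquires a compensating simple zero in exactly these ranges, coming from $\Gamma(s-\sgnstr(n)\kappa-\ell)$ in the denominator; specifically $\lim_{s\to k}c\,\Gamma(s-k)=-(2k-1)!(2k-2)!$ in part (1) and $-(2k-1)$ in part (2). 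Rewriting $\mathcal{G}=(c\,\Gamma(s-k))\cdot y^{k}\sum_{M}(g/\Gamma(s-k))|_{2k}M$, pulling $\Gamma(s-k)$ through the sum, and passing to the limit reduces everything to Lemma \ref{elreFay}(3), whose constant $-(2k+N-1)!/(4\pi(2k-1)!)$ multiplies with $\lim c\,\Gamma(s-k)$ to produce $\mathscr{C}_{k,n}$ (resp.\ $(2k-2-n)!/(2k-2)!$), matching the previous range.

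The main obstacle is the bookkeeping around the transition where \eqref{eqn:Qs=kappa} gives way to \eqref{eqn:Qs=kappapos}: one must check that the simple pole of $\mathcal{Q}^{N}_{s,k}$ for $N\geq 0$ is exactly cancelled by a simple zero of $c_{s,\kappa}^{m,n}$, with compatible residues, and that the two regimes glue into the single uniform formula stated in the lemma. Once this $\Gamma$-factor cancellation is verified range by range, the rest is essentially the elliptic-expansion identity in Lemma \ref{elreFay} plus the definition \eqref{Psi} of Petersson's Poincar\'e series.
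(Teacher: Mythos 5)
Your proposal is correct and follows essentially the same route as the paper for part (1): unfold $\mathcal{G}_{s,1-k}^{-n,2k-1}$ via \eqref{GFay} and \eqref{gFay}, recognize the summand as the left-hand side of Lemma \ref{elreFay} (2) when the upper index $n+1-2k$ of $\mathcal{Q}$ is negative (so one may set $s=k$ directly), and as the regularized limit of Lemma \ref{elreFay} (3) when $n\geq 2k-1$, where the simple pole of $\Gamma(s-k)$ in \eqref{Q} is cancelled by the simple zero of $c_{s,1-k}^{-n,2k-1}$ coming from $\Gamma(s-k)$ in the denominator of \eqref{eqn:cmndef}; your constants $c=-1$ for $n\leq 0$, $c=-n!(2k-2)!/(2k-2-n)!$ for $0<n\leq 2k-2$, and $\lim_{s\to k}c\,\Gamma(s-k)=-(2k-1)!\,(2k-2)!$ for $n\geq 2k-1$ all check out against \eqref{eqn:cmndef}. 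The only genuine difference is in part (2): the paper does not re-unfold but instead invokes the shift relation \eqref{eqn:Gshift} to write $\mathcal{G}_{s,k-1}^{n,1}$ as $\bigl(c_{s,k-1}^{n,1}/c_{s,1-k}^{n+2-2k,2k-1}\bigr)\mathcal{G}_{s,1-k}^{n+2-2k,2k-1}$ and then quotes part (1), so all the analytic work is done once; you instead redo the unfolding with $\kappa=k-1$, $m=n$, second index $1$, applying \eqref{eqn:Qs=kappa} for $n\geq 0$ and \eqref{eqn:Qs=kappapos} with the compensating zero $\lim_{s\to k}c_{s,k-1}^{n,1}\Gamma(s-k)=-(2k-1)$ for $n\in-\N$. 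Both computations land on the same constants ($n!$ and $(2k-2-n)!/(2k-2)!$ respectively); the paper's reduction is slightly more economical, while your direct treatment avoids having to verify the ratio identity $c_{k,k-1}^{n,1}\,\mathscr{C}_{k,2k-2-n}/c_{k,1-k}^{n+2-2k,2k-1}=n!$ and its $\Gamma$-regularized analogue.
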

\begin{proof}
(1) By inspecting the definitions \eqref{GFay} and \eqref{Psi} the claim follows once we show that
\begin{equation}\label{eqn:gXrel}
\lim_{s\to k}\left( c_{s, 1-k}^{-n, 2k-1} y^k g_{s, 1-k}^{-n, 2k-1}(z, \z)\right) = \frac{(-4\y y)^k}{4\pi}
(z-\overline{\z})^{-2k}
\mathscr{C}_{k,n} X^{n-2k+1}.
\end{equation}
By definition \eqref{gFay}, we have
\[
g_{s, 1-k}^{-n, 2k-1}(z, \z) = y^{-k} \left(\frac{\z-\overline{z}}{z-\overline{\mathfrak{z}}}\right)^{k} \mathcal{Q}_{s, k}^{n-2k+1}(z, \z)e^{i(n-2k+1)\theta}.
\]
If $n <2k-1$, then we may plug in $s=k$ directly and then use Lemma~\ref{elreFay} (2) to obtain
\[
 \lim_{s\to k} g_{s,1-k}^{-n,2k-1}(z,\z) =-\frac{(2k-2-n)!}{4\pi} (-4\y)^k
(z-\overline{\z})^{-2k}  X^{n-2k+1}.
\]
For $n\leq 0$, we have $c_{k,1-k}^{-n,2k-1} =(-1)$ implying \eqref{eqn:gXrel} in this case.\\
\indent For $0<n<2k-1$, we obtain \eqref{eqn:gXrel} for $n< 2k-1$, computing
\[
c_{k,1-k}^{-n,2k-1} = -\frac{n!(2k-2)!}{(2k-n-2)!}.
\]

If $n\geq 2k-1$, then $\ell=2k-1$ in \eqref{eqn:cmndef} and we use Lemma~\ref{elreFay} (3) (replacing $n$ by $n-2k+1$) to obtain the desired formula.

(2)  By \eqref{eqn:Gshift}, we have
\begin{equation}\label{shiftG}
\mathcal{G}_{s,k-1}^{n,1}(z,\z) = \frac{c_{s,k-1}^{n,1}}{c_{s,1-k}^{n+2-2k,2k-1}}\mathcal{G}_{s,1-k}^{n+2-2k, 2k-1}(z,\z).
\end{equation}
For $n\geq 0$, we may then directly plug in $s=k$ and use (1) to obtain the claim by simplifying, with $n\mapsto k-2-n$,
\[
\frac{c_{k,k-1}^{n,1}}{c_{k,1-k}^{n+2-2k,2k-1}} \mathscr{C}_{k,2k-2-n}=n!.
\]

For $n<0$, we use (1) to obtain, by \eqref{shiftG},
\begin{align*}
\lim_{s\to k}\mathcal{G}_{s,k-1}^{n,1}(z,\z) &= \frac{\lim_{s\to k}\left(\Gamma(s-k)c_{s,k-1}^{n,1}\right)}{\lim_{s\to k}\left(\Gamma(s-k)c_{s,1-k}^{n+2-2k,2k-1}\right)}\lim_{s\to k}\mathcal{G}_{s,1-k}^{n+2-2k, 2k-1}(z,\z)\\
&=\frac{\lim_{s\to k}\left(\Gamma(s-k) c_{s,k-1}^{n,1}\right)}{\lim_{s\to k}\left(\Gamma(s-k)c_{s,1-k}^{n+2-2k,2k-1}\right)} \frac{(-4\y y)^k}{4\pi}
\mathscr{C}_{k,2k-2-n}
 \Psi_{2k,-n-1}^{\z}(z).
\end{align*}
We then plug in \eqref{eqn:cmndef} and take the limit to obtain the claim.
\end{proof}

Next define for $n\in\Z$ the following polar harmonic Maass Poincar\'e series
\begin{equation}\label{eqn:Pdef}
\mathbb{P}_{2-2k,n}^{\z}(z)=\mathbb{P}_{2-2k,n}(z, \z):=\sum_{M\in\operatorname{SL}_2(\mathbb{Z})}\left.\varphi_{2-2k,n}^{\z}(z)\right|_{2-2k, z}M,
\end{equation}
with, using \eqref{Qid},
\begin{align}\label{eqn:varphidef}
\varphi_{2-2k,n}^{\z}(z)
&=\varphi_{2-2k,n}(z,\z):=
(z-\overline{\z})^{2k-2}
\beta\left(1-r^2_\z(z);2k-1, -n\right)X^n_\z(z)\\
&=\frac{\y^{k-1}y^{k-1}}{a_{1-k,n}}\left(\frac{z-\overline{\z}}{\z-\overline{z}}\right)^{k-1}\mathcal{Q}_{k,1-k}^{n}(z,\z)e^{in\theta_{\z}(z)}. \nonumber
\end{align}

The following more precise version of Theorem \ref{OperatorTheorem} shows how the functions $\mathbb{P}_{2-2k,n}^{\z}$ are related to the functions $\Psi_{2k,n}^{\z}$ via differential operators.
\begin{theorem}\label{thm:PoincProperties}
Assume $k\in \mathbb{N}_{>1}$.  The functions $\{\mathbb{P}_{2-2k, n}^{\z}: \z\in\H,\ n\in\Z\}$ {\rm(}resp. $\{\mathbb{P}_{2-2k, n}^{\z}: n\in\Z\}${\rm)} span the space $\mathscr{H}_{2-2k}$ {\rm(}resp. $\mathscr{H}_{2-2k}^{\z}${\rm)}.   Moreover
\begin{align}
\label{eqn:DPoincProperties}  D^{2k-1}\left(\mathbb{P}_{2-2k,n}^{\z}\right)&=-(2k-2)!\left(\frac{\y}{\pi}\right)^{2k-1}\Psi_{2k, n+1-2k}^{\z},\\
\label{eqn:xiPoincProperties}
\xi_{2-2k}\left(\mathbb{P}_{2-2k,n}^{\z}\right)&=(4\y)^{2k-1}\Psi_{2k, -n-1}^{\z}.
\end{align}
The functions $\mathbb{P}_{2-2k,n}^{\z}$ vanish unless $n\equiv k-1\pmod {\omega_\z}$, in which case their principal parts equal
$$
2\omega_{\z}
(z-\overline{\z})^{2k-2}
 X^n_\z(z)
\begin{cases}
	\beta_0\left(1-r^2_\z(z);2k-1, -n\right) & \text{if }n>2k-2,\\
	\beta\left(1-r^2_\z(z);2k-1, -n\right) & \text{if }0\leq n\leq 2k-2,\\
\mathcal{C}_{2k-1,-n}
&\text{if }n<0.
\end{cases}
$$
\end{theorem}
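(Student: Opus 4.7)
The plan is to proceed in four steps, leveraging Fay's framework from Section~\ref{sec:prelim}.

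\emph{Step 1 (Fay identification).} The key reduction is to identify $\mathbb{P}_{2-2k,n}^{\z}$ with a Fay Poincar\'e series. Starting from the second expression for $\varphi_{2-2k,n}^{\z}$ in~\eqref{eqn:varphidef} and comparing with~\eqref{GFay}--\eqref{gFay} with Fay's parameters $(s,\kappa,m,n_{\mathrm{Fay}})=(k,1-k,-n,0)$ (so that $c_{k,1-k}^{-n,0}=1$), one obtains
\begin{equation*}
\mathbb{P}_{2-2k,n}^{\z}(z) = \frac{(\y y)^{k-1}}{a_{1-k,n}}\,\mathcal{G}_{k,1-k}^{-n,0}(z,\z).
\end{equation*}
Theorem~\ref{raise G} gives convergence of the series (since $\re(s)=k>1$) and the $\mathcal{F}_{1-k,k}$-transformation in $z$; Proposition~\ref{prop:LRKL}~(2) then delivers weight-$(2-2k)$ modularity and, since the eigenvalue $(k-(1-k))(1-k-(1-k))$ vanishes, harmonicity $\Delta_{2-2k}(\mathbb{P}_{2-2k,n}^{\z})=0$. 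Boundedness at $i\infty$ is verified from the asymptotics $1-r_{\z}^2(z)\sim 4\y/y$ and $\beta(Z;2k-1,-n)=O(Z^{2k-1})$ as $y\to\infty$, which give $\varphi_{2-2k,n}^{\z}(z)=O(y^{-1})$, placing $\mathbb{P}_{2-2k,n}^{\z}$ in $\mathscr{H}_{2-2k}^{\z}$.

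\emph{Step 2 (principal parts and the congruence).} The singular contribution of $\mathbb{P}_{2-2k,n}^{\z}$ at $\z$ comes only from the stabilizer sum $\sum_{M\in \Gamma_{\z}}\varphi_{2-2k,n}^{\z}|_{2-2k}M$. Since $r_\z$ is $\Gamma_\z$-invariant (by~\eqref{eqn:rinv}) and $X_\z(Mz)$ differs from $X_\z(z)$ by a root of unity for $M\in \Gamma_\z$, this sum collapses to $2\omega_\z\cdot\varphi_{2-2k,n}^{\z}$ exactly when $n\equiv k-1\pmod{\omega_\z}$ and vanishes otherwise, paralleling the argument for the elliptic expansion in Proposition~\ref{prop:elliptic}~(2) via~\cite[(2a.16)]{Pe1}. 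The three claimed cases then follow by writing $\beta=\beta_0+\mathcal{C}_{2k-1,-n}$ (cf.~\eqref{eqn:beta0def}) and applying Lemma~\ref{bet0lem} near $z=\z$: for $n<0$, $\beta_0\to 0$, leaving $\mathcal{C}_{2k-1,-n}X_\z^n(z-\overline{\z})^{2k-2}$ as the principal part; for $0\le n\le 2k-2$, the full $\beta$-expression is retained (with a logarithmic singularity); for $n>2k-2$, the smooth piece $\mathcal{C}_{2k-1,-n}X_\z^n(z-\overline{\z})^{2k-2}$ vanishes at $\z$, leaving only $\beta_0$.

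\emph{Step 3 (differential operator identities).} Since both $D^{2k-1}$ and $\xi_{2-2k}$ commute with $|_{2-2k}M$, it suffices to evaluate them on $\varphi_{2-2k,n}^{\z}$ and then sum over $\SL_2(\Z)$. Viewing $\varphi_{2-2k,n}^{\z}$ as an elliptic expansion~\eqref{eqn:elliptic} around $\varrho=\z$: for $0\le n\le 2k-2$, only a single $\beta$-term is present with $c^-(n)=1$; for $n\notin[0,2k-2]$, the decomposition $\beta=\beta_0+\mathcal{C}_{2k-1,-n}$ contributes $c^-(n)=1$ together with a meromorphic term $c^+(n)=\mathcal{C}_{2k-1,-n}$. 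Substituting into Proposition~\ref{Explicit} and simplifying via the explicit factorial formulas for $\mathcal{C}_{2k-1,-n}$ in~\eqref{eqn:n<0singconst} (namely $(2k-2)!(-n-1)!/(2k-2-n)!$ for $n<0$ and $-(2k-2)!(n+1-2k)!/n!$ for $n\ge 2k-1$) collapses all three ranges uniformly to
\begin{align*}
\xi_{2-2k}\!\left(\varphi_{2-2k,n}^{\z}\right) &= (4\y)^{2k-1}\psi_{2k,-n-1}^{\z},\\
D^{2k-1}\!\left(\varphi_{2-2k,n}^{\z}\right) &= -(2k-2)!\left(\tfrac{\y}{\pi}\right)^{2k-1}\psi_{2k,n+1-2k}^{\z}.
\end{align*}
Summing over $M\in \SL_2(\Z)$ and using Petersson's definition~\eqref{Psi} gives~\eqref{eqn:xiPoincProperties} and~\eqref{eqn:DPoincProperties}.

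\emph{Step 4 (spanning, and the main obstacle).} For the span, given $F\in\mathscr{H}_{2-2k}^{\z}$, Proposition~\ref{prop:elliptic}~(2) restricts the admissible indices in its elliptic expansion to $n\equiv k-1\pmod{\omega_\z}$, which is exactly the set of indices for which $\mathbb{P}_{2-2k,n}^{\z}\neq 0$. Combining Step~2 (so that arbitrary admissible principal parts can be realized) with standard existence/uniqueness for polar harmonic Maass forms with prescribed principal parts (in the spirit of \cite[Theorem 6.10]{MockBook}, applicable because $M_{2-2k}=S_{2-2k}=\{0\}$ for $k>1$) shows that $F$ is uniquely recoverable as a finite linear combination of $\{\mathbb{P}_{2-2k,n}^{\z}\}$. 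The statement for $\mathscr{H}_{2-2k}$ follows by iterating over the finitely many singularity orbits in $\SL_2(\Z)\backslash\H$. \textbf{The main obstacle} is the uniform simplification in Step~3: verifying that the three different formulas produced by Proposition~\ref{Explicit} across the ranges $n<0$, $0\le n\le 2k-2$, and $n\ge 2k-1$ all collapse to the single constant $-(2k-2)!(\y/\pi)^{2k-1}$ rests decisively on the explicit factorial evaluation of $\mathcal{C}_{2k-1,-n}$ in~\eqref{eqn:n<0singconst}, which must be combined carefully with the case split of $b_{F,\varrho}(n)$ in Proposition~\ref{Explicit}.
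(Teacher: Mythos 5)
Your proposal is correct in substance, and Steps 1, 2, and 4 match the paper's proof (the identification $\mathbb{P}_{2-2k,n}^{\z}=\frac{(\y y)^{k-1}}{a_{1-k,n}}\mathcal{G}_{k,1-k}^{-n,0}$, the stabilizer computation of the principal parts via \eqref{eqn:rinv} and \cite[(2a.16)]{Pe1}, and the spanning argument by matching principal parts). Where you genuinely diverge is Step 3: the paper never differentiates the seed termwise. Instead it stays entirely inside Fay's framework, using \eqref{eqn:KRrep}, Theorem \ref{raise G} (twice) to identify $\mathcal{K}_{1-k,z}^{2k-1}(\mathcal{G}_{k,1-k}^{-n,0})$ with $\lim_{s\to k}\mathcal{G}_{s,1-k}^{-n,2k-1}$, the symmetry relations \eqref{eqn:Gshift} and \eqref{eqn:Gconj} for the $\xi$-image, and then Lemma \ref{lem:PsiGrel} to convert back to $\Psi_{2k,\ast}^{\z}$, finishing with Bol's identity. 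Your route---applying Proposition \ref{Explicit} to the single elliptic-expansion term $\varphi_{2-2k,n}^{\z}$ and summing over $\SL_2(\Z)$---is more elementary and self-contained, and I checked that the constants do collapse to $-(2k-2)!$ (resp.\ $(4\y)^{2k-1}$) in all three ranges. It buys you independence from Lemma \ref{lem:PsiGrel} and the $\mathcal{G}$-symmetries, at the cost of two points you should make explicit: (i) the interchange of $D^{2k-1}$ and $\xi_{2-2k}$ with the infinite sum over $\SL_2(\Z)$ needs a locally uniform convergence argument for $k>1$ (the paper outsources exactly this to Fay's Theorem \ref{raise G}); and (ii) the evaluation $\mathcal{C}_{2k-1,-n}=-\frac{(2k-2)!(n+1-2k)!}{n!}$ for $n\geq 2k-1$ is not \eqref{eqn:n<0singconst}, which only covers $n<0$ via Lemma \ref{bet0lem} (2) (valid for $b>0$); you need the separate identity $\sum_{j=0}^{2k-2}\binom{2k-2}{j}\frac{(-1)^j}{j-n}=-\frac{(2k-2)!(n-2k+1)!}{n!}$, which is true but must be supplied. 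Neither issue is a fatal gap, but both must be addressed for the argument to be complete.
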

\begin{remarks}

\noindent

\noindent
\begin{enumerate}[leftmargin=*,label={\rm(\arabic*)}]
\item
It is also natural to ask about the properties of $\z\mapsto \mathbb{P}_{2-2k,n}^{\z}(z)$ for fixed $z$.  To investigate this, note that by a comparison of definitions \eqref{eqn:Pdef} and \eqref{GFay}, we find that
\begin{equation}\label{eqn:PG}
\mathbb{P}_{2-2k, n}^{\z}(z) = \frac{(\y y)^{k-1}}{a_{1-k, n}} \mathcal{G}_{k, 1-k}^{-n, 0}(z, \z),
\end{equation}
with $a_{1-k,n}$ given in \eqref{eqn:cndef}, and we evaluate $c_{k,1-k}^{-n,0}=1$ via \eqref{eqn:cmndef}.
Combining this with Theorem \ref{raise G}, one can conclude that the function $\z\mapsto \y^{n+2-2k}\mathbb{P}_{2-2k, n}^{\z}(z)$ has weight $2k-2-2n$ and eigenvalue $(n+1)(n+2-2k)$ under $\Delta_{2k-2-2n,\mathfrak z}$.

\item
By Theorem \ref{thm:PoincProperties} and Proposition \ref{prop:elliptic}, one may write the elliptic expansion of $\mathbb{P}_{2-2k,n}^{\z}$ around $\varrho$ for $n<0$ and $\z\in \H$ as
\begin{multline}\label{eqn:Pexpand}
\mathbb{P}_{2-2k,n}^{\z}(z) =\mathcal{C}_{2k-1,-n}\left(z-\overline{\varrho}\right)^{2k-2}\bigg(2\omega_{\z} \delta_{[\varrho]=[\z]}X_{\varrho}^{n}(z) + \sum_{\ell\geq 0}c_{2-2k,\varrho}^{\z,+}(n,\ell) X_{\varrho}^{\ell}(z)\\
+ \sum_{\ell<0}c_{2-2k,\varrho}^{\z,-}(n,\ell) \beta_0\!\left(1-r_{\varrho}^2(z); 2k-1,-\ell\right) X_{\varrho}^{\ell}(z)\bigg).
\end{multline}
\end{enumerate}
Furthermore, using \eqref{eqn:Cdef}, Lemma \ref{bet0lem} (1), and then Lemma \ref{bet0lem} (2) yields
\begin{equation}\label{eqn:n<0singconst}
\mathcal{C}_{2k-1,-n}= \beta\!\left(1;-n,2k-1\right)= \beta(-n,2k-1)=\frac{(-n-1)!(2k-2)!}{(2k-2-n)!},
\end{equation}
giving the constant in \eqref{eqn:n<0sing} in terms of factorials.
\end{remarks}

\begin{proof}[Proof of Theorem \ref{thm:PoincProperties}]

Using \eqref{GFay}, we conclude that $\mathbb{P}_{2-2k,n}^{\z}$ satisfies weight $2-2k$ modularity.

The principal part of $\mathbb{P}_{2-2k,n}^{\z}$ around $\z$ comes from the terms $M\in\Gamma_{\z}$ in \eqref{eqn:Pdef}.
Vanishing of the principal part for $n\not\equiv k-1\pmod{\omega_{\z}}$ follows from \eqref{eqn:rinv} together with \cite[(2a.16)]{Pe1}. For $n\equiv k-1\pmod{\omega_{\z}}$, this yields $2\omega_{\z}\varphi_{2-2k,n}^{\z}$.
For $0\leq n\leq 2k-2$, this directly yields the principal part.  For $n>2k-2$ or $n<0$, we rewrite the incomplete beta function using \eqref{eqn:beta0def} and note that
for $n>2k-2$ only the non-meromorphic part grows as $z$ approaches $\z$,
while for $n<0$ only the meromorphic part grows.

Since every possible principal part in the elliptic expansion of an element of $\mathscr{H}_{2-2k}^{\z}$ is obtained as a linear combination of the Poincar\'e series $\mathbb{P}_{2-2k,n}^{\z} ( n\in\Z)$, these span the space $\mathscr{H}_{2-2k}^{\z}$.  Moreover, eliminating the principal parts at different points in $\H$ one at a time implies that the space $\mathscr{H}_{2-2k}$ is spanned by $\{\mathbb{P}_{2-2k,n}^{\z}: \z\in\H,\ n\in\Z \}$.

We next compute the image of the Poincar\'e series under $D^{2k-1}$.  Using \eqref{eqn:PG} and \eqref{eqn:KRrep}, we obtain
\begin{equation}\label{eqn:raiseP1}
R_{2-2k,z}^{2k-1}\left(\mathbb{P}_{2-2k,n}^{\z}(z)\right) = \frac{\y^{k-1} y^{-k}}{a_{1-k, n}}\mathcal{K}_{1-k,z}^{2k-1}\left(\mathcal{G}_{k, 1-k}^{-n, 0}(z, \z)\right).
\end{equation}
Using Theorem \ref{raise G} twice, we then find that
\[
\mathcal{K}_{1-k,z}^{2k-1}\left(\mathcal{G}_{k, 1-k}^{-n, 0}(z, \z)\right) = \lim_{s\to k}\mathcal{G}_{s,1-k}^{-n,2k-1}(z,\z).
\]
We next employ Lemma \ref{lem:PsiGrel} (1) and plug back into \eqref{eqn:raiseP1}, yielding
\[
R_{2-2k}^{2k-1}\left(\mathbb{P}_{2-2k,n}^{\z}(z)\right) =\frac{(-4)^k\y^{2k-1}\mathscr{C}_{k,n} }{4\pi a_{1-k, n} }\Psi_{2k,n+1-2k}^{\z}(z).
\]
We then plug in the definitions of $\mathcal C_{k,n}$ and $a_{1-k,n}$ and use \eqref{Bol}
to conclude \eqref{eqn:DPoincProperties}.

It remains to compute the image under $\xi_{2-2k}$.  Firstly, by Proposition \ref{prop:LRKL} (1), with $f:\mathbb H\to \C$, we have
$$
\xi_{2-2k}\left(f(z)\right)=y^{-2k}\overline{L\left(f(z)\right)}=y^{-k}\overline{\mathcal{L}_{1-k}\left(y^{1-k}f(z)\right)}.
$$
Using \eqref{eqn:PG} and \eqref{eqn:cmndef} thus gives
\begin{equation*}
	\xi_{2-2k}\left(\mathbb{P}_{2-2k,n}^\z(z)\right)=\frac{\y^{k-1}}{a_{1-k,n}} y^{-k}\overline{\mathcal{L}_{1-k,z}\left(\mathcal{G}_{k,1-k}^{-n,0}(z,\z)\right)}.
	\end{equation*}

\noindent Now by Theorem \ref{raise G}, we have
\begin{equation*}
\mathcal{G}_{k,1-k}^{-n,0}(z,\z)=\begin{cases}
	\mathcal{L}_{k-1,\z}^n\left(\mathcal{G}_{k,1-k}(z,\z)\right)\qquad \text{if } n\geq 0,\\
	\mathcal{K}_{k-1,\z}^{-n}\left(\mathcal{G}_{k,1-k}(z, \z)\right)\qquad \ \text{if } n<0.
	\end{cases}
\end{equation*}
Again using Theorem \ref{raise G} and then applying \eqref{eqn:Gconj} gives
\begin{equation*}
\overline{\mathcal{L}_{1-k,z}\left(\mathcal{G}_{k,1-k}^{-n,0}(z,\z)\right)}=\lim_{s\to k}\overline{\mathcal{G}_{s,1-k}^{-n,-1}(z,\z)}=\lim_{s\to k}\mathcal{G}_{s,k-1}^{n,1}(z,\z).
\end{equation*}
For $n\geq 0$, we then use Lemma \ref{lem:PsiGrel} (2) to obtain
\[
\xi_{2-2k}\left(\mathbb{P}_{2-2k,n}^\z(z)\right)=\frac{\y^{k-1}}{a_{1-k,n}} \frac{(-4\y)^{k}}{4\pi} n! \Psi_{2k,-n-1}^{\z}(z).
\]
We then simplify the factor in front using \eqref{eqn:cndef} to obtain the claim for $n\geq 0$.
For $n<0$, we use Lemma \ref{lem:PsiGrel} (2) to obtain that
\[
\xi_{2-2k}\left(\mathbb{P}_{2-2k,n}^\z(z)\right)=\frac{\y^{k-1}}{a_{1-k,n}} \frac{(-4\y)^{k}}{4\pi} \frac{(2k-2-n)!}{(2k-2)!}\Psi_{2k,-n-1}^{\z}(z).
\]
Simplifying the constant yields the claim.

\end{proof}

We may now combine the results in this section to obtain Theorem \ref{OperatorTheorem}.
\begin{proof}[Proof of Theorem \ref{OperatorTheorem}]

(1)  Part (1) is the first statement in  Theorem \ref{thm:PoincProperties}.

\noindent
(2)  The claim follows from \eqref{eqn:xiPoincProperties} and \eqref{eqn:DPoincProperties} together with the fact that $\Psi_{2k,n}^{\z}\in S_{2k}$ if and only if $n\geq 0$ and $\Psi_{2k,n}^{\z}\in \mathbb{D}_{2k}^{\z}$ if and only if $n\leq -2k$.  These claims about $\Psi_{2k,n}^{\z}$ follow in turn from the principal parts and orthogonality given in Proposition \ref{Psiprop}.

\noindent

\noindent
(3) This follows by \eqref{eqn:xiPoincProperties}, \eqref{eqn:DPoincProperties}, and Proposition \ref{Psiprop}, since $\Psi_{2k,n}^{\z}\in \mathbb{E}_{2k}^{\z}$ if and only if $-2k<n<0$.

\noindent
(4)  The statements given here are precisely \eqref{eqn:xiPoincProperties} and \eqref{eqn:DPoincProperties}.

\end{proof}

\section{Duality, orthogonality, and the proof of Theorem \ref{DualityTheorem}}\label{sec:duality}

\subsection{Definition of the inner product}\label{sec:innerdef}
Petersson defined a regularized inner product (see \cite[p. 34]{Pe2}) for meromorphic modular forms by taking the Cauchy principal value of the naive definition.  More precisely, suppose that all of the poles of $f,g\in \mathbb{S}_{2k}$ in $\SL_2(\Z)\backslash \H$ are at the points $\z_1,\dots,\z_r$,
where we abuse notation to allow $\z_{\ell}$ to denote both the coset $[\z_{\ell}]$ is $\SL_2(\Z)\setminus\mathbb H$ as well as its representative $\mathfrak z_\ell\in\mathbb H$.
  Petersson constructed a punctured fundamental domain ($\varepsilon > 0$)
\[
\mathcal{F}_{\varepsilon_1,\dots,\varepsilon_r}^*=\mathcal{F}_{(\z_1,\varepsilon_1),\dots, (\z_{r},\varepsilon_r)}^*:=\mathcal{F}^* \Big\backslash \bigcup_{j=1}^{\ell}\mathcal{B}_{\varepsilon_j}\!\left(\z_j\right),
\]
where $\mathcal{F}^*$ is a fundamental domain with $\z_{\ell}$ in the interior of $\Gamma_{\z_{\ell}}\mathcal{F}^*$ and  $\mathcal{B}_{\varepsilon}(\z)$ is the ball around $\z$ of hyperbolic radius $\varepsilon$ (see \eqref{d}). He then defined the \begin{it}regularized inner product between $f$ and $g$\end{it}
\[
\left<f,g\right>:=\lim_{\varepsilon_1, \dots, \varepsilon_r\to 0}\int_{\mathcal{F}_{\varepsilon_1,\dots,\varepsilon_r}^*}f(z) \overline{g(z)} y^{2k} \frac{dx dy}{y^2},
\]
and explicitly determined (see \cite[(6)]{Pe2}) that this regularization converges if and only if for all $n<0$ and $
\varrho \in \H$
\[
c_{f,\varrho}(n)c_{g,\varrho}(n)=0.
\]

\subsection{Proof of Theorem \ref{DualityTheorem}: Duality for meromorphic cusp forms}

\begin{proof}[Proof of Theorem \ref{DualityTheorem}]

The basic idea is to use the fact that, by Proposition \ref{Psiprop}, $\Psi_{2k,n}^{\z}$ is orthogonal to cusp forms for $n<0$ and then compute the inner product in a second way, evaluating it as the sum of two elliptic coefficients. This method was used by Guerzhoy \cite{GuerzhoyGrids} to obtain duality results for Fourier coefficients of weakly holomorphic modular forms.  In order to evaluate the inner product of meromorphic cusp forms as a sum of elliptic coefficients, we mimic calculations given in \cite[Theorem 4.1]{BOR} and \cite{BKvP}.

To begin, for $\z_1,\z_2\in\H$ and $n<0\leq m$, we use Theorem \ref{thm:PoincProperties} to compute, using Stokes' Theorem,
\begin{equation}\label{eqn:inner1}
0=\left<\Psi_{2k,n}^{\z_1},\Psi_{2k,m}^{\z_2}\right> = \!\left(4\y_2\right)^{1-2k}\left<\Psi_{2k,n}^{\z_1},\xi_{2-2k}\left(\mathbb{P}_{2-2k,-m-1}^{\z_2}\right)\right>.
\end{equation}
Rewriting
\[
\overline{\xi_{2-2k}\left(\mathbb{P}_{2-2k,-m-1}^{\z_2}(z)\right)} = y^{-2k} L_{2-2k}\left(\mathbb{P}_{2-2k,-m-1}^{\z_2}\right),
\]
\eqref{eqn:inner1} becomes
\[
\lim_{\varepsilon_{1}, \varepsilon_{2}\to 0^+} \int_{\mathcal{F}_{(\z_1,\varepsilon_1), (\z_{2},\varepsilon_r)}^*} \Psi_{2k,n}^{\z_1}(z)L_{2-2k}\left(\mathbb{P}_{2-2k,-m-1}^{\z_2}(z)\right) \frac{dx dy}{y^2}.
\]
Stokes' Theorem together with invariance of the integrand under the action of $\SL_2(\Z)$ then yields
\begin{multline*}
-\lim_{\varepsilon_{1}\to 0^+}\int_{\partial \mathcal{B}_{\varepsilon_1}(\z_1)\cap \mathcal{F}_{(\z_1,\varepsilon_1),(\z_{2},\varepsilon_2)}^*} \Psi_{2k,n}^{\z_1}(z)\mathbb{P}_{2-2k,-m-1}^{\z_2}(z) dz\\
-\delta_{\left[\z_1\right]\neq\left[\z_2\right]}\lim_{\varepsilon_{2}\to 0^+}\int_{\partial \mathcal{B}_{\varepsilon_2}(\z_2)\cap \mathcal{F}_{(\z_1,\varepsilon_1),(\z_{2},\varepsilon_2)}^*} \Psi_{2k,n}^{\z_1}(z)\mathbb{P}_{2-2k,-m-1}^{\z_2}(z) dz,
\end{multline*}
where $\partial \mathcal{B}_{\varepsilon}(\z)$ denotes the boundary of $\mathcal{B}_{\varepsilon}(\z)$.
The differential $\Psi_{2k,n}^{\z_1}(z)\mathbb{P}_{2-2k,-m-1}^{\z_2}(z) dz$ is invariant under $\Gamma_{\z_j}$, and hence we may extend the integrals to precisely one copy of $\mathcal{B}_{\varepsilon_j}(\z_j)$, obtaining
\begin{equation}\label{eqn:inner2}
0=\frac{1}{\omega_{\z_1}} \mathcal{J}\!\left(\z_1\right) + \frac{\delta_{\z_1\neq \z_2}}{\omega_{\z_2}}\mathcal{J}\!\left(\z_2\right),
\end{equation}
where
\[
\mathcal{J}
(\varrho)
:=\lim_{\varepsilon\to 0^+}\int_{\partial \mathcal{B}_{\varepsilon}
(\varrho)
} \Psi_{2k,n}^{\z_1}(z)\mathbb{P}_{2-2k,-m-1}^{\z_2}(z) dz.
\]
Note that $r_{
\varrho
}(z)=\varepsilon$ for $z\in \partial \mathcal{B}_{\varepsilon}(
\varrho
)$.  Hence, plugging in the elliptic expansions \eqref{eqn:Psiexpand} around $\varrho=\z_j$, of $\Psi_{2k,n}^{\z_1}$ and \eqref{eqn:Pexpand} of $\mathbb{P}_{2-2k,-m-1}^{\z_2}$ we evaluate
\begin{align*}
\mathcal{J}(\varrho)&=\mathcal{C}_{2k-1,m+1}\lim_{\varepsilon\to 0^+}\int_{\partial \mathcal{B}_{\varepsilon}(\varrho)}(z-\overline{\varrho})^{-2}\\
&\qquad \left( 2\omega_{\z_1}\delta_{[\varrho]=[\z_1]} X^n+ \sum_{\ell\geq 0} c_{2k,\varrho}^{\z_1}(n,\ell) X^{\ell}\right)
\Bigg( 2\omega_{\z_2}
\delta_{[\varrho]=[\z_2]} X^{-m-1}\\
&\qquad\quad+ \sum_{\ell\geq 0} c_{2-2k,\varrho}^{\z_2,+}(-m-1,\ell) X^{\ell} + \sum_{\ell<0} c_{2-2k,\varrho}^{\z_2,-}(-m-1,\ell) \beta_0\!\left(1-\varepsilon^2; 2k-1,-\ell\right) X^{\ell}\Bigg)dz,
\end{align*}
where we abbreviate $X=X_{\varrho}(z)$ and $r=r_{\varrho}(z)$.  The integral gives $2\pi i$ times the residue of the integrand at $z=\varrho$, yielding
\begin{multline*}
\mathcal{J}(\varrho)=2\pi i\mathcal{C}_{2k-1,m+1} \bigg( 2\omega_{\z_1}
\delta_{[\varrho]=[\z_1]} c_{2-2k,\varrho}^{\z_2,+}(-m-1,-n-1) + 2\omega_{\z_2}
\delta_{[\varrho]=[\z_2]} c_{2k,\varrho}^{\z_1}(n,m)\\
+ \sum_{\ell\geq 0} c_{2k,\varrho}^{\z_1}(n,\ell)c_{2-2k,\varrho}^{\z_2,-}(-m-1,-\ell-1)	\lim_{\varepsilon\to 0}\beta_0\!\left(1-\varepsilon^2; 2k-1,\ell+1\right)\bigg).
\end{multline*}
However, Lemma \ref{bet0lem} (1) implies that as, $\varepsilon\to 0$,
\[
\beta_0\!\left(1-\varepsilon^2; 2k-1,\ell+1\right)\ll \varepsilon^{2\ell+2}
\]
so that for $\ell+1>0$
\[
\lim_{\varepsilon\to 0}\beta_0\!\left(1-\varepsilon^2; 2k-1,\ell+1\right)=0.
\]
Therefore
\[
\mathcal{J}(\varrho)=4\pi i\mathcal{C}_{2k-1,m+1}\left( \omega_{\z_1}\delta_{
[\varrho]=[\z_1]
}c_{2-2k,\varrho}^{\z_2,+}(-m-1,-n-1) + \omega_{\z_2}
\delta_{[\varrho]=[\z_2]} c_{2k,\varrho}^{\z_1}(n,m)\right).
\]
Plugging back into \eqref{eqn:inner2} yields
\[
4\pi i\mathcal{C}_{2k-1,m+1} \left( c_{2-2k,\z_1}^{\z_2,+}(-m-1,-n-1) +c_{2k,\z_2}^{\z_1}(n,m)\right)=0.
\]
This gives the claim after the change of variables $n\mapsto -n-1$.

\end{proof}
\begin{remark}
The orthogonality to cusp forms shown by Petersson can also be reproven directly either by rewriting $\Psi_{2k,n}^{\z_1}$ as a constant multiple of $\xi_{2-2k}(\mathbb{P}_{2-2k,-n-1}^{\z_1})$ or rewriting $\Psi_{2k,m}^{\z_2}$ as a constant multiple of $D^{2k-1}(\mathbb{P}_{2-2k,m+2k-1})$.

\end{remark}

\end{document}